\newcommand\Beq{\begin{eqnarray}}
\newcommand\Eeq{\end{eqnarray}}
\newcommand{\grad}{\nabla}
\newcommand{\var}[0]{\textrm{Var}}
\newcommand{\Var}[0]{\textrm{Var}}
\newcommand{\I}[0]{\textrm{I}}
\newtheorem{claim}[theorem]{Claim}
\begin{document}

\title{Optimization Over a Probability Simplex}

\author{\name James Chok$^{1, 2}$ \email{james.chok@ed.ac.uk} \\
       \name Geoffrey M. Vasil$^{1}$ \email gvasil@ed.ac.uk \\
       \addr $^{1}$School of Mathematics and Maxwell Institute for Mathematical Sciences, The University of Edinburgh, 
       Edinburgh EH9 3FD, United Kingdom \\
       \addr $^{2}$Department of Applied Mathematics and Theoretical Physics, University of Cambridge, Wilberforce Road, Cambridge CB3 0WA,
        United Kingdom
        }

\editor{Silvia Villa}

\maketitle

\begin{abstract}
We propose a new iteration scheme, the Cauchy-Simplex, to optimize convex problems over the probability simplex $\{w\in\mathbb{R}^n\ |\ \sum_i w_i=1\ \textrm{and}\ w_i\geq0\}$.
Specifically, we map the simplex to the positive quadrant of a unit sphere, envisage gradient descent in latent variables, and map the result back in a way that only depends on the simplex variable. Moreover, proving rigorous convergence results in this formulation leads inherently to tools from information theory (e.g., cross-entropy and KL divergence). Each iteration of the Cauchy-Simplex consists of simple operations, making it well-suited for high-dimensional problems. In continuous time, we prove that $f(x_T)-f(x^*) = \mathcal{O}(1/T)$ for differentiable real-valued convex functions, where $T$ is the number of time steps and $w^*$ is the optimal solution. Numerical experiments of projection onto convex hulls show faster convergence than similar algorithms. Finally, we apply our algorithm to online learning problems and prove the convergence of the average regret for (1) Prediction with expert advice and (2) Universal Portfolios.
\end{abstract}

\begin{keywords}
  Constrained Optimization, Convex Hull, Simplex, Orthogonal Matrix, Gradient Flow
\end{keywords}
Accepted at JMLR.

\section{Introduction}
Optimization over the probability simplex, (\textit{i.e.}, unit simplex) occurs in many subject areas, including portfolio management \citep{up_helmbold, jmlr_portfolio_optimization, EGD_finance, canyakmaz2023}, machine learning \citep{ jmlr_ml_simplex_1, jmlr_ml_simplex_3, jmlr_ml_simplex_2, floto2023}, population dynamics \citep{CHA_population_dynamics, CHA_population_dynamics_and_genetics}, and multiple others including statistics and chemistry \citep{simplex_maximum_clique, complexity_of_optimization_over_simplex, CHA_chemistry, candelieri2025}.
In this paper, we look at minimizing differentiable real-valued convex functions $f(w)$ with $w\in\mathbb{R}^n$ within the probability simplex,
\begin{align}\label{eq:constrained_optimization_problem}
    \min_{w\in \Delta^n} f(w),\quad \textrm{where }\Delta^n\ =\ \{w\in\mathbb{R}^n\ | \ \sum_{i} w_{i} = 1\ \textrm{and} \ w_{i} \geq 0\}.
\end{align}

While quadratic programs can produce an exact solution under certain restrictions on $f$, they tend to be computationally expensive when $n$ is large \citep{numerical_optimization_jorge}. Here, we provide an overview of iterative algorithms to solve this problem approximately and introduce a new algorithm for differentiable real-valued convex functions $f$. We also show how our method can be applied to other constraints, particularly orthogonal matrices.

{Previous works (e.g., projected and exponentiated gradient descent) are equivalent to the discretizations of an underlying continuous-time gradient flow method. However, in each case, the gradient flow only enforces either the positivity or the unit-sum condition. Alternatively, the Frank-Wolfe algorithm is not derived from a gradient flow method and is inherently a discrete-time method.}

{We propose a continuous-time gradient flow that is able to satisfy both constraints. Being a continuous-time gradient flow allows us to prove convergence in a continuous-time setting, and we also prove convergence of its forward Euler discretization scheme. Moreover, we see the ideas of projected and exponentiated gradient descent in our algorithm.}

The remainder of this paper is structured as follows. In Section \ref{sec:previous_works}, we provided an overview of methods used to solve (\ref{eq:constrained_optimization_problem}). In Section \ref{sec:algorithm}, we present our proposed algorithm, show how it can be derived using simple calculus, and reveal connections to previous works. Theoretical analysis of the algorithm is shown in Section \ref{sec:convergence_proof}, proving a convergence rate of $\mathcal{O}(1/T)$. Section \ref{sec:orthogonal_matrix} shows how to extend the method presented in Section \ref{sec:algorithm} to constraints over orthogonal matrices. Finally, Section \ref{sec:applications} provides theoretical applications in game theory and finance, as well as numerical applications in geometry and finding how to weight survey questions to follow a desired distribution. 

\section{Previous Works}\label{sec:previous_works} 
{Two common classes of methods currently exist to find the minimum value of a smooth, convex function within a non-empty, convex, and compact \(R \subset \mathbb{R}^n \): (i) Projection-Based Methods, adjust guesses to stay in the set; and (ii) Frank-Wolfe Methods which reduce the function's value while explicitly staying in the set.}

{\textit{Projection-based methods} use a projection to satisfy the constraints of $R$.}

{This paper uses two definitions of a projection. Classically in the optimization literature \citep{Galntai2004}, projection of a point $y\in \mathbb{R}^n$ refers to the optimization procedure
\begin{align}\label{eq:def_projection}
    \textrm{proj}_R(y)\ =\ \min_{x\in R} \|x - y \|.
\end{align}
Since $R$ is convex and compact, a unique solution exists.
}

{Any linear idempotent map generalizes the classical notion of projection \citep{halmos1998}, \textit{i.e.}, $g:D\mapsto D$ for $D$ a non-empty set with $g(g(x))=g(x), \forall x\in D$. Defining the set $g(D) = \{g(x) | x \in D\} \subseteq D$, $g$ is said to project $D$ onto $g(D)$. This generalization of projection still encompasses the optimization procedure as if we take $g(y) = \textrm{proj}_R(y)$, then $g(g(y))=\textrm{proj}_R(g(y))=g(y)$.}

\textit{Projected gradient descent} (PGD) is a simple method to solve differentiable real-valued convex problems over $R$. The iteration scheme follows
\Beq\label{eq:original_pgd}
{w}^{t+1}\ =\ \textrm{proj}_R(w^t-\alpha_t \nabla_{w} f(w^t)),\qquad\text{where}\qquad \textrm{proj}_R(y)\ =\ \min_{x\in R} \|x - y\|,
\Eeq
and learning rate $\alpha_t > 0$. In general, $w^t - \alpha_t \nabla_{w} f(w^t)$ neither satisfies the positivity or unit-sum constraint. Projection of this vector into the unit simplex can be performed in $O(mn)$ operations \citep{michelot_1986, chen_2011, wang_2013, Kyrillidis_2013}, where $n$ is the dimension of $w$ and $1\leq m\leq n$ is the number of iterations of the algorithm. While this added cost is often negligible, it can become significant for large $n$. More recent applications increasingly require large dimensions, e.g. with $n \geq 1000 $ \citep{Markowitz_1994}.

This formulation of PGD can be simplified with linear constraints $Aw=b$, where $A$ is an $k\times n$ matrix and $b\in\mathbb{R}^k$. \cite{Luenberger_1997} projects $\nabla_w f(w^t)$ into the nullspace of $A$ and descends the result along the projected direction. For the unit-sum constraint, this algorithm requires solving the constrained optimization problem
\begin{equation*}
    \arg\min_{x} \frac{1}{2}\|\nabla_{w} f(w^t) - x\|^2, \quad \text{with}\quad \sum_i x_i\ =\ 0.
\end{equation*}
This problem yields to the method of Lagrange multipliers, giving the solution 
\begin{equation*}
    w^{t + 1}_i\ =\ w^t_i -\alpha_t\left(\nabla_{w_i} f(w^t) - \frac{1}{n}\sum_i\nabla_{w_i} f(w^t) \right).
\end{equation*}
While this scheme satisfies the unit-sum constraint similarly to (\ref{eq:original_pgd}), it does not satisfy the positivity constraint. This requires the same projection used in PGD, thus costing $\mathcal{O}(mn)$ operations \citep{numerical_optimization_jorge, sketching_method_convex_hull}.

\textit{Exponentiated Gradient Descent} (EGD) first presented by \cite{EGD_og_textbook}, and later by \cite{EGD_OG_linear_regression}, is a specific case of mirror descent. They consider gradient flow in the mirror space $(\Delta^n)^*$, and one maps between $\Delta^n$ and $(\Delta^n)^*$ via the functions $\nabla h:\Delta^n \mapsto (\Delta^n)^*$ and $(\nabla h)^{-1}:(\Delta^n)^*\mapsto \Delta^n$, where $h(w)=\sum_i (w_i\log(w_i) - w_i)$ \citep{Vishnoi_2021}. This yields the gradient flow in the mirror space
\begin{equation*}
    \frac{d\theta}{dt}\ =\ -\alpha\nabla_x f\big((\nabla h)^{-1}(\theta)\big),
\end{equation*}
for continuous learning rate $\alpha>0$. Mapping back into $\Delta^n$ yields the gradient flow
\begin{equation}\label{eq:continous_mirror_descent}
    \frac{d}{dt}\log(w_i)\ =\ -\alpha \nabla_{w_i} f(w),
\end{equation}
While this preserves positivity, the unit-sum constraint is not preserved. This can be seen as the differential equation can be rewritten as $dw_i/dt = -\alpha w_i\nabla_{w_i}f(w)$, and $\sum_i dw_i/dt$ is not zero in general.

A forward Euler discretization of (\ref{eq:continous_mirror_descent}) yields $w^{t+1}_i=w^t_i\exp(-\alpha_t \nabla_{w_i}f(w^t))$, thus mapping $\mathbb{R}^n_{\geq 0}\mapsto \mathbb{R}^n_{\geq 0}$ where $\mathbb{R}^n_{\geq 0}=\{x\in\mathbb{R}^n - \{0\} | x_i\geq 0\}$. 
To enforce the unit-sum constraint, one can use the projection defined in (\ref{eq:def_projection}) by taking $R=\Delta^n$; however, as suggested by \cite{EGD_OG_linear_regression}, a simple idempotent map to project $\mathbb{R}^n_{\geq 0}$ onto $\Delta^n$ can be used, given by
\begin{align*}
    w_i^{t+1}\ =\ \frac{w_i^t\exp(-\alpha_t \nabla_{w_i} f(w^t))}{\sum_{j} w_j^t\exp(-\alpha_t \nabla_{w_j}f(w^t))},
\end{align*}
with discrete learning-rate $\alpha_t>0$. {Thus the projection, given by the normalization factor, takes $\mathcal{O}(n)$ operations. Moreover, discretization is necessary for the algorithm to satisfy the constraint.}

\textit{Frank-Wolfe-based methods} exploit the convexity of the domain $R$, eliminating the need of a projection.

\textit{The Frank-Wolfe method} is a classic scheme \citep{FW_og} that is experiencing a recent surge popularity \citep{fw_svm_2, FW_distributed, FW_matrix_recovery, fw_svm_1}.
The method skips projection by assuming $R$ is convex. That is,
\begin{align}
& w^{t+1}\ =\ (1 - \gamma_t)w^t + \gamma_t s^t, \nonumber \\
& \textrm{where} \quad s^t\ =\ \arg \min_{s\in R}\ s\cdot \nabla_{w} f(w^t) \quad \textrm{and}\quad 0\ \leq\ \gamma_t\ \leq\ 1.\label{eq:Linear_Minimization_Oracle}
\end{align}
Since $R$ is convex, $w^{t+1} \in R$ automatically. For the simplex, the subproblem (\ref{eq:Linear_Minimization_Oracle}) is known in closed form.
Frank-Wolfe-based methods tend to be fast for sparse solutions but display oscillatory behavior near the solution, resulting in slow convergence \citep{FW_global_linear_convergence, FW_a_journey}.

\textit{The Pairwise Frank-Wolfe} (PFW) method improves upon the original by introducing an `away-step' to prevent oscillations allowing faster convergence \citep{FW_comments_away_steps, jaggi_2013, FW_global_linear_convergence}.

While PFW, EGD, and PGD have guaranteed convergence under constant and decreasing step sizes \citep{Vishnoi_2021, jaggi_2013}, a line search is often used in practice to improve run-time \citep{numerical_optimization_jorge}. Taking $f$ in (1) to be quadratic \citep{sqop_0, CHA_population_dynamics_and_genetics, selvi_2023}, a line search has analytical solutions for the Frank-Wolfe and PFW methods but not for EGD and PGD. EGD and PGD require approximate methods (e.g., backtracking method \citep{numerical_optimization_jorge}), adding extra run time per iteration.

\section{The Main Algorithm}\label{sec:algorithm}
For convex problems over a probability simplex, we propose what we named the Cauchy-Simplex (CS). 
{For an initial vector $w^0\ \in\ \mathrm{relint}(\Delta^n)\ =\ \{w\in\mathbb{R}^n| \sum_i w_i = 1, w_i > 0\}$, we propose the following iteration scheme}
\begin{align} \label{eq:discrete_time_scheme}
& w^{t+1}_i\ =\ w^t_i\ -\ \eta^t\, d^t_i,\quad \\
\notag & \text{with}\ d^t_i\ =\ w^t_i\, (\nabla_{w_i} f(w^t)\ -\ w^t\cdot \nabla_w f(w^t)),\quad \quad 0<\eta^t< {\eta}^{t,\max},\\
\notag &\text{and}\ \eta^{t, \max}\ =\ \frac{1}{\max_i(\nabla_{w_i} f(w^t)\ -\ w^t\cdot \nabla_w f(w^t))}.
\end{align}
\begin{remark}\label{remark:max_index}
    Rewriting 
    \begin{align}\label{eq:remark_max_index}
        \max_i\left(\nabla_{w_i}f(w^t)-w^t\cdot\nabla_w f(w^t)\right)\ =\ (e_j - w^t)\cdot \nabla f(w^t),
    \end{align}
    where $e_j\in\mathbb{R}^n$ is the standard unit vector with $j=\arg\max_i \nabla_{w_i}f(w^t)$. For finite iterations $T>0$, $w^T\in\textrm{relint}(\Delta^n)$ and thus $e_j-w^T\neq 0$. Therefore (\ref{eq:remark_max_index}) is zero when $\nabla f(w^T)=0$, $i.e.$, the optimal solution has been reached.
\end{remark}

When $\max_i(\nabla_{w_i} f(w^t) - w^t\cdot \nabla_w f(w^t))=0$, then $\eta^{t,\max}=\infty$ and $\eta^t$ is set to any finite positive constant. As seen in Remark \ref{remark:max_index}, this condition being met implies $\nabla f(w^t)=0$, and $w^{t+1}=w^t$ for finite $\eta^t$.

The upper bound on the learning rate, ${\eta}_{t}$, ensures that $w^{t+1}_i$ is positive for all $i$. Summing over the indices of $d^t$
\begin{align*}
    \sum_iw_i^t\, \Big(\nabla_{w_i}f(w^t)\, -\, w^t\cdot\nabla_w f(w^t)\Big)\ =\ (w^t \cdot \nabla_w f(w^t)) \bigg(1\ -\ \sum_i w_i^t \bigg).
\end{align*}

Defining the linear operator ${\mathcal{L}}(v) = \sum_i v_i$, if $\mathcal{L}(w^t)=1$, then $\mathcal{L}(d^t)=0$. This makes $w^{t+1}$ also satisfy the unit-sum constraint as $\mathcal{L}(w^{t+1})=\mathcal{L}(w^t)=1$. 

However, it is important to note that once an index is set to zero, it will stay zero for future iterations. Thus, having the initial condition in the relative interior of $\Delta^n$ ensures flow for each of the indices. Another important implication of this is that taking $\eta^t=\eta^{t,\max}$ may cause an index to be incorrectly set to zero. As such, step sizes should be taken such that $\eta^t<\eta^{t,\max}$ to prevent this.

\subsection{Motivating Derivation}
Our derivation begins by modeling $w\in\Delta^n$ through a latent variable, $\psi\in\mathbb{R}^n$,
\begin{align*}
    w_i\ =\ w_i(\psi)\ =\ \frac{\psi_i^2}{\sum_j\psi_j^2},
\end{align*}
which automatically satisfies positivity and unit probability. Thus, the optimization problem over the probability simplex can be considered as an unconstrained optimization problem
\begin{align*}
    \min_{\psi\in\mathbb{R}^n} F(\psi),\qquad\text{where}\qquad F(\psi)\ =\ f(w(\psi)).
\end{align*}
Now consider the continuous-time gradient descent on $\psi$,
\begin{equation}\label{eq:psi_gradient_flow}
    \frac{d \psi_j}{d t}\ =\ -\alpha \frac{\partial f}{\partial \psi_j},
\end{equation}
for continuous learning rate $\alpha>0$.
This then induces a gradient flow in $w$. To find this, first note that 
\begin{align*}
    \frac{\partial w_i}{\partial \psi_j}\ =\ \frac{2\psi_i \|\psi\|^2 \delta_{ij}\ -\ \psi_i^2\psi_j}{\|\psi\|^4}\ =\ \frac{2}{\|\psi\|^2}\left(\psi_i \delta_{ij}\ -\ 2 w_i \psi_j\right),
\end{align*}
where $\delta_{ij} = 1$ if $i = j$ and zero otherwise. Using the notation $\dot{\psi}=d\psi/dt$, the chain rule gives the gradient flow
\begin{align*}
    \frac{dw_i}{dt}\ =\ \sum_{j}\frac{\partial w_i}{\partial \psi_j}\frac{d\psi_j}{dt}\ =\ \frac{2}{\|\psi\|^2}\bigg(\psi_i \dot{\psi}_i\ -\ w_i \, \psi \cdot \dot{\psi} \bigg).
\end{align*}
By equation (\ref{eq:psi_gradient_flow}),
\begin{align*}
    \psi\cdot \dot{\psi}\ =\ -\alpha \sum_{i, j}\psi_j \frac{\partial w_i}{\partial \psi_j}\frac{\partial f}{\partial w_i}\ =\ -2\alpha (w\cdot \nabla_w f) \left(1 - \sum_j w_j\right)\ =\ 0.
\end{align*}

Thus, the gradient flow in $w$ can be simplified to
\begin{equation}\label{eq:cauchy_simplex_dw_dt}
\frac{dw_i}{dt}\ =\ \frac{2}{\|\psi\|^2} \, \psi_i \dot{\psi}_i\ =\ -\beta w_i (\nabla_{w_i} f(w) - w\cdot\nabla_w f(w))\quad\textrm{where}\quad \beta\ =\ \frac{4\alpha}{\|\psi\|^2}.
\end{equation}
Thus giving the iterative scheme
\begin{equation}\label{eq:cs_simplified}
w^{t+1}_i\ =\ w^t_i-\eta^t d^t_i \quad\text{where}\quad d^t_i\ =\ w^t_i \Big(\nabla_{w_i} f(w^t) - w^t\cdot\nabla_w f(w^t)\Big).
\end{equation}

While $w$ is initially modeled through a latent variable, the resulting gradient flow is only in terms of the constrained variable $w$. This allows the iteration scheme to only reference vectors in $\Delta^n$, and we no longer need to consider the latent domain $\mathbb{R}^n$. This derivation is to make a connection to recent work using a latent-variable approach \citep{lezcano_2019, Bucci_2022, li_2022}.

\subsection{On the Learning Rate}\label{sec:on_the_learning_rate}
The proof of convergence in Section \ref{sec:convergence_proof} assumes that $\eta^t < \eta^{t,\max}$; thus, all weights stay strictly positive but may be arbitrarily close to zero. However, accumulated rounding errors may result in some weights becoming zero or negative. As such, in our numerical implementation, a weight is set to zero once it is lower than some threshold (we choose 1e-10).

Once an index is set to zero, it will remain zero and can be ignored. This gives an altered maximum learning rate
\begin{equation*}
\tilde{\eta}^{t, \max}\ =\ \frac{1}{\max_{i\in S} (\nabla_{w_i} f(w^t) - w^t\cdot \nabla_w f(w^t))},
\end{equation*}
where $S=\{i \ | \ w_i > 0\}$. It follows that $\eta^{t, \max} \leq \tilde{\eta}_{t,\max}$, allowing for larger step-sizes to be taken.

The requirement of a maximum learning rate is not unique to the CS and is shared with PGD and EGD using fixed learning rates \citep{lu_2018}. However, this maximum learning rate is based on the Lipschitz constant of $\nabla f$, which is typically unknown. While numerical methods to approximate the Lipschitz constant exist, they can cause a failure of convergence \citep{Hansen_1992}.

More generally, PGD and EGD converge with a line search bounded by an arbitrarily large positive constant \citep{xiu_2007, Li_2018}, and becomes an extra parameter in the method. In contrast, each iteration of the CS has an easily computable maximum possible learning rate, $\eta^{t,\max}$, to be used in a line search.

\subsection{Connections to Previous Methods}
There are two ways of writing the gradient flow for the Cauchy-Simplex. In terms of the flow in $w$:
\begin{equation}\label{eq:cs_dw}
    \frac{dw}{dt}\ =\ -W\Pi_w \nabla_w f(w),\quad\text{where}\quad \Pi_w\ =\ \textrm{I} - \mathbbm{1}\otimes w,
\end{equation}
and $W$ is a diagonal matrix filled with $w$, $\textrm{I}$ is the identity matrix, and $\mathbbm{1}$ is a vector full of ones.

In terms of the flow in $\log(w)$:
\begin{equation}\label{eq:cs_dlog}
    \frac{d}{dt}\log(w)\ =\ -\Pi_w \nabla_w f(w),
\end{equation}
giving an alternative exponential iteration scheme 
\begin{equation}\label{eq:cs_discrete_exponential}
    w^{t+1}\ =\ w^{t}\exp(-\eta^t \Pi_w \nabla_w f(w^t)).
\end{equation}

\begin{claim}\label{claim:projection}
$\Pi_w$ projects $\mathbb{R}^n$ on the null space of the operator $\mathcal{L}_w(v)=\sum_i w_i v_i$, provided that $\sum_i w_i = 1$.
\end{claim}
\begin{proof}
    To see that $\Pi_w$ is an idempotent map, 
    \begin{align*}
    \Pi^2_w\ &=\ \I^2 - 2(\mathbbm{1} \otimes w) + (\mathbbm{1}\otimes w)(\mathbbm{1}\otimes w)  \nonumber \\ 
    &=\ \I - 2(\mathbbm{1} \otimes w) + \left(\sum_i w_i \right) \, (\mathbbm{1}\otimes w) \ = \ \Pi_w ,
    \end{align*}
    and is, therefore, a projection.
    
    To see that $\Pi_w(\mathbb{R}^n)$ is the null space of $\mathcal{L}_w$, it is easy to see that for any $u\in\mathbb{R}^n$, $\mathcal{L}_w(\Pi_w u)=0$. For the converse, let $v$ be in the null space of $\mathcal{L}_w$. By definition of the null space, $\mathcal{L}_w(v) = \sum_i w_iv_i = 0$, hence
    \begin{align*}
        \Pi_w(v)\ =\ (\I - \mathbbm{}1\otimes w)v\ =\ v.
    \end{align*}
    Thus $\Pi_w$ is surjective, mapping $\mathbb{R}^n\mapsto \mathrm{Nul}(\mathcal{L}_w)$.
\end{proof}

\begin{remark}
    {While $\Pi_w$ is a projection, it takes $\mathcal{O}(n)$ operations.}
\end{remark}

The formulations (\ref{eq:cs_dw}) and (\ref{eq:cs_dlog}) draw a direct parallel to both PGD and EGD, as summarized in Table \ref{tab:comparison_methods}. 

PGD can be written in continuous form as
\begin{equation*}
    \frac{dw}{dt}\ =\ - \Pi_{1/n}\nabla_w f(w) \quad\text{where}\quad \Pi_{1/n}\ =\ \I - \frac{1}{n}\mathbbm{1}\otimes \mathbbm{1} .
\end{equation*}
The projector helps PGD satisfy the unit-sum constraint, but not perfectly for general $w$. However, the multiplication with the matrix $W$ slows the gradient flow for a given index $w_i$ as it nears the boundary, with zero flow once it hits the boundary. Thus preserving positivity.

EGD, similarly, can be written in continuous form as
\begin{equation*}
    \frac{d}{dt}\log(w)\ =\ -\nabla_w f(w).
\end{equation*}
Performing the descent through $\log(w)$ helps EGD preserve positivity. Introducing the projector helps the resulting exponential iteration scheme (\ref{eq:cs_discrete_exponential}) to agree with its linear iteration scheme (\ref{eq:cs_simplified}) up to $\mathcal{O}((\eta^t)^2)$ terms. Thus helping preserve the unit-sum constraint.

\begin{claim}\label{clm:exponential_scheme_order_2}
    The Cauchy-Simplex exponentiated iteration scheme (\ref{eq:cs_discrete_exponential}) agrees up to $\mathcal{O}((\eta^t)^2)$ with it's linear iteration scheme (\ref{eq:discrete_time_scheme}). This can be seen by Taylor expanding (\ref{eq:cs_discrete_exponential}) w.r.t. $\eta^t$ around zero.
\end{claim}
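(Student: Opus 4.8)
The plan is to Taylor-expand the matrix exponential in \eqref{eq:cs_discrete_exponential} to first order in $\eta_t$ and match it term by term against the linear update \eqref{eq:discrete_time_scheme}. First I would rewrite $\Pi_w\nabla f$ in components. Since $(\mathbbm{1}\otimes w)_{ij}=w_j$, we have $(\Pi_w\nabla f)_i = \nabla_i f - \sum_j w_j^t\nabla_j f = \nabla_i f - w^t\cdot\nabla f$. Because the product in \eqref{eq:cs_discrete_exponential} is taken componentwise (a Hadamard product of $w^t$ with the exponential vector), each coordinate of the exponentiated scheme is
\[
  w_i^{t+1} = w_i^t\exp\!\big(-\eta_t(\nabla_i f - w^t\cdot\nabla f)\big).
\]

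Next I would expand the scalar exponential in each coordinate as $\exp(-\eta_t x_i) = 1 - \eta_t x_i + \mathcal{O}(\eta_t^2)$ with $x_i = \nabla_i f - w^t\cdot\nabla f$. Multiplying through by $w_i^t$ gives
\[
  w_i^{t+1} = w_i^t - \eta_t\, w_i^t(\nabla_i f - w^t\cdot\nabla f) + \mathcal{O}(\eta_t^2).
\]
The first-order coefficient is exactly $d_i^t$ from \eqref{eq:discrete_time_scheme}, so in vector form the expansion reads $w^{t+1} = w^t - \eta_t d^t + \mathcal{O}(\eta_t^2)$, which is the linear scheme up to the claimed order. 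This is the entire content of the argument; no structural machinery beyond a one-term Taylor expansion is required.

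The only step requiring genuine care is confirming that the $\mathcal{O}(\eta_t^2)$ remainder is uniform over the coordinates rather than silently hiding a coordinate-dependent constant. To control it I would use $w_i^t\in[0,1]$ together with a bound on the gradient over the simplex — for instance, $\|\nabla f\|$ bounded on the compact set $\Delta^n$ when $f\in C^1$ — which makes $|\nabla_i f - w^t\cdot\nabla f|$ uniformly bounded. Taylor's theorem with the Lagrange form of the remainder then supplies a single constant $C$ with per-coordinate error at most $C\eta_t^2$, hence $\|w^{t+1}_{\exp} - w^{t+1}_{\mathrm{lin}}\| = \mathcal{O}(\eta_t^2)$. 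I expect this uniformity check to be the main (and essentially only) obstacle, and it is a mild one.
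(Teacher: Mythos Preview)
Your proposal is correct and follows essentially the same approach as the paper, which simply Taylor expands \eqref{eq:cs_discrete_exponential} in $\eta_t$ around zero to obtain $w^{t+1} = w^t(1 - \eta_t \Pi_w \nabla f) + \mathcal{O}(\eta_t^2)$. Your componentwise derivation and the added uniformity check on the remainder are more detailed than the paper's one-line argument, but the underlying idea is identical.
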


\begin{remark}
    Combining PGD and EGD as
    \begin{align*}
        \frac{d}{dt}\log(w)\ =\ -\Pi_{1/n}\nabla_w f(w)
    \end{align*}
    preserves positivity but not the unit-sum constraint. This can be seen as the differential equation can be rewritten as $dw_i/dt=-w_i\Pi_{1/n}\nabla_{w_i} f(w)$. In general, $\sum_i dw_i/dt$ is non-zero and thus not an invariant property of the gradient flow.
\end{remark}

Unlike both PGD and EGD, the continuous-time dynamics of the CS are enough to enforce the probability-simplex constraint. This allows us to use the gradient flow of CS, \textit{i.e.} (\ref{eq:cauchy_simplex_dw_dt}), to prove convergence when optimizing convex functions (seen in Section \ref{sec:convergence_proof}). This contrasts with PGD and EGD, in which the continuous dynamics only satisfy one constraint. The discretization of these schemes is necessary to allow an additional projection step, thus satisfying both constraints.

\begin{table}[]
\caption{{Comparison of gradient flow for different optimization methods and constraints.}}
\label{tab:comparison_methods}
\centering
\renewcommand{\arraystretch}{2}
\begin{tabular}{|c|c|c|}
     \cline{2-3}
     \multicolumn{1}{c|}{} & $\sum_i w_i \neq 1$ & $\sum_i w_i = 1$\\
     \hline
     $w_i \ngeq 0$ & GD: $\frac{dw}{dt} = -\nabla_w f(w)$ & (\citeauthor{Luenberger_1997}) PGD: $\frac{dw}{dt} = -\Pi_{1/n}\nabla_w f(w)$\\ 
     \hline
     $w_i \geq 0$  &  EGD: $\frac{d}{dt}\log(w) = -\nabla_w f(w)$ & CS: $\frac{d}{dt}\log(w) = - \Pi_w \nabla_w f(w)$\\
     \hline    
\end{tabular}
\end{table}

\subsection{The Algorithm}
The pseudo-code of our method can be seen in Algorithm \ref{alg:thats_cap}.

\begin{algorithm}
\caption{Our proposed algorithm}\label{alg:thats_cap}
\begin{algorithmic}
\REQUIRE $\varepsilon \gets 10^{-10}$ \quad (Tolerance for the zero set)
\STATE{$w^0 \gets(1/n,\cdots, 1/n)$}
\WHILE{termination conditions not met}
    \STATE{$S \gets \{i = 1,\cdots, n\ | \ w_i > \varepsilon \}$} 
    \STATE{$Q \gets \{i = 1,\cdots, n\ | \ w_i \leq \varepsilon \}$}
    \STATE{}
    \STATE{{Choose $\eta^t > 0$}} 
    \STATE{$\eta^{\max} \gets \cfrac{1}{\max_{i \in S} (\nabla_{w_i} f(w^t)) - w^t \cdot\nabla_w f(w^t)}$}
    \STATE{$\eta^t \gets \min({\eta}^t, \eta^{\max})$}
    \STATE{}
    \STATE{$\hat{w}^{t + 1}_i \gets w^t_i -\eta^t w^t_i(\nabla_{w_i} f(w^t) - w^t\cdot\nabla_w f(w^t))$}
    \STATE{}
    \STATE{$\hat{w}_j^{t + 1} \gets 0, \ \ \ \forall j \in Q$}
    \STATE{$w_i^{t + 1} \gets \hat{w}_i^{t + 1} / \sum_j\hat{w}_j^{t + 1}$ \qquad \qquad (Normalizing for numerical stability)}
\ENDWHILE
\end{algorithmic}
\end{algorithm}

\section{Convergence Proof}\label{sec:convergence_proof}
We prove the convergence of the Cauchy-Simplex via its gradient flow. We also state the theorems for convergence of the discrete linear scheme but leave the proof in the appendix.

\begin{theorem}\label{thm:decreasing}
Let $f$ be convex with Lipschitz continuous gradient, real-valued and continuously differentiable w.r.t. $w^t$ and $w^t$ continuously differentiable w.r.t. $t$. For the Cauchy-Simplex gradient flow (\ref{eq:cauchy_simplex_dw_dt}) with initial condition $w^0\in\textrm{relint}(\Delta^n)$, $f(w^t)$ is a strictly decreasing function and stationary at the optimal solution for all $t\in[0,T]$ and finite $T>0$.
\end{theorem}
\begin{proof}
Notice that $dw^t/dt$ can be rewritten as
\begin{align*}
    \frac{d}{dt}\log(w^t_i)\ =\ -\Pi_{w^t}\nabla_w f(w^t).
\end{align*}
Since $\nabla f$ is Lipschitz, and $\Delta^n$ is a compact subset of $\mathbb{R}^n$, $\|\nabla f(w^t)\|$ is bounded for all $w^t\in\Delta^n$. Thus, if $w^0\in\textrm{relint}(\Delta^n)$, strict positivity of $w_i^t$ is preserved for $t\in[0,T]$. Furthermore, since $\sum_i dw_i/dt=\sum_i (W\Pi_w \nabla_w f(w))_i=0$, $\sum_i w_i$ is an invariant quantity of the gradient flow. Therefore, if $w^0\in\textrm{relint}(\Delta^n)$ then $w^t\in\textrm{relint}(\Delta^n)$ for $t\in [0, T]$, and finite $T>0$.

By direction computation
\begin{align*}
    \frac{df}{dt}\ &=\ \frac{\partial f}{\partial w^t}\cdot\frac{dw^t}{dt}\ =\ -\sum_i \nabla_{w_i} f(w^t) \bigg(w^t_i (\nabla_{w_i} f(w^t) - w^t\cdot\nabla_w f(w^t))\bigg)\\
    &=\ -\bigg(\sum_i w_i^t\left(\nabla_{w_i} f(w^t)\right)^2 - \left(w^t\cdot \nabla_w f(w^t)\right)^2\bigg) \\
    &:=\ -\var[\nabla_w f(w^t)\, |\, w^t],
\end{align*}
where $\var[v\, |\,w]$ is the variance of a vector $v\in\mathbb{R}^n$ with respect to the discrete measure $w\in\Delta^n$. 
The variance can be rewritten as
\begin{equation*}
    \var[v\, |\, w]\ =\ \sum_i w_i (v_i - v\cdot w)^2\ =\ \sum_i w_i(\Pi_w v)_i^2,
\end{equation*}
for all $v\in\mathbb{R}^n$, and thus is non-negative. Since $w^t\in\textrm{relint}(\Delta^n)$, it follows that $\frac{df}{dt}\leq 0$ and that $f$ is decreasing in time.

As $w^t\in\textrm{relint}(\Delta^n)$, $df/dt=0$ only when $\Pi_{w_t} \nabla_w f(w^t)=0$. However, this occurs only if $w^t$ is an optimal solution to (\ref{eq:constrained_optimization_problem}), which can be verified by checking the KKT conditions with the constraints of the simplex, \textit{i.e.}, $\sum_i w_i=1$, and $w_i \geq 0$, shown in Appendix \ref{sec:KKT}.

Thus $f$ is strictly decreasing in time, and stationary only at the optimal solution.
\end{proof}

\begin{theorem}
Let $f$ be real-valued, convex and continuously differentiable w.r.t. $w^t$, $w^t$ continuously differentiable w.r.t. $\,t$, and $w^*\in\Delta^n$ be a solution to (\ref{eq:constrained_optimization_problem}). Under the Cauchy-Simplex gradient flow (\ref{eq:cauchy_simplex_dw_dt}), for all $t\in[0,T]$ and finite $T>0$, the relative entropy
\begin{equation*}
    D(w^*|w^t)\ =\ \sum_{w^*_i\neq 0}w^*_i\log\left(\frac{w^*_i}{w^t_i}\right)
\end{equation*}
is a decreasing function in time for $w^t\in \text{relint}(\Delta^n)$.
\end{theorem}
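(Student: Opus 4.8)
The plan is to differentiate $D(w^*|w^t)$ directly in time and exploit the logarithmic form of the flow. First I would split the relative entropy as $D(w^*|w^t) = \sum_{w^*_i \neq 0} w^*_i \log w^*_i - \sum_{w^*_i \neq 0} w^*_i \log w^t_i$, observing that the first sum is constant in $t$ while only the second depends on the trajectory. Since $w^t \in \text{int}(\Delta^n)$ keeps every $w^t_i > 0$, each $\log w^t_i$ is differentiable along the flow, so I can differentiate term by term.

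The key step is to substitute the $\log$-form of the Cauchy-Simplex gradient flow, equation (\ref{eq:cs_dlog}), namely $\frac{d}{dt}\log(w^t) = -\Pi_{w^t}\nabla f$, whose $i$-th component is $-(\nabla_i f - w^t\cdot\nabla f)$. This yields
\[
\frac{d}{dt}D(w^*|w^t) = -\sum_{w^*_i \neq 0} w^*_i \frac{d}{dt}\log w^t_i = \sum_{w^*_i \neq 0} w^*_i (\nabla_i f - w^t \cdot \nabla f).
\]
Because the indices with $w^*_i = 0$ contribute nothing, I would extend the sum to all $i$ and use $\sum_i w^*_i = 1$ to collapse this to $w^* \cdot \nabla f - w^t \cdot \nabla f = \nabla f \cdot (w^* - w^t)$.

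The final step invokes convexity. The first-order characterization gives $f(w^*) \geq f(w^t) + \nabla f \cdot (w^* - w^t)$, hence $\nabla f \cdot (w^* - w^t) \leq f(w^*) - f(w^t)$. Since $w^*$ solves (\ref{eq:constrained_optimization_problem}) we have $f(w^*) \leq f(w^t)$, so the right-hand side is nonpositive and therefore $\frac{d}{dt}D(w^*|w^t) \leq 0$, which is the claim.

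I do not expect a serious obstacle here; the computation is short once the $\log$-form (\ref{eq:cs_dlog}) of the flow is used, which is precisely why this formulation is the natural one for entropy estimates. The one point requiring care is the bookkeeping over the support $\{i : w^*_i \neq 0\}$: I must confirm that the restricted sum may be harmlessly extended to all coordinates (valid because $w^*_i = 0$ on the complement, so those terms vanish in both $\sum_i w^*_i \nabla_i f$ and $\sum_i w^*_i$) and that $w^t \in \text{int}(\Delta^n)$ throughout guarantees $\log w^t_i$ stays finite and differentiable — a fact supplied by the earlier remark that the flow preserves the interior of $\Delta^n$.
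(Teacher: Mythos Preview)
Your proposal is correct and follows essentially the same argument as the paper: split off the constant part of the relative entropy, differentiate using the flow (the paper writes it as $-\sum_i \frac{w^*_i}{w^t_i}\frac{dw^t_i}{dt}$, which is exactly your $-\sum_i w^*_i\,\frac{d}{dt}\log w^t_i$), reduce to $\nabla f\cdot(w^*-w^t)$, and finish with the first-order convexity inequality and optimality of $w^*$. The only cosmetic difference is that you invoke the $\log$-form (\ref{eq:cs_dlog}) directly, whereas the paper substitutes $dw^t_i/dt$ first.
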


\begin{proof}
We rewrite the relative entropy into
\begin{equation*}
    D(w^*|w^t)\ =\ \sum_{w^*_i\neq 0}w^*_i\log(w^*_i) - \sum_{i} w^*_i\log(w_i^t).
\end{equation*}
By direction computation
\begin{align*}
    \frac{d}{dt}D(w^*|w^t)\ &=\ -\sum_i\frac{w^*_i}{w_i^t}\frac{d w_i^t}{dt}\ =\ \sum_i w^*_i(\Pi_{w^t} \grad_{w_i} f(w^t))\ =\ \nabla_{w} f(w^t) \cdot(w^* - w^t).
\end{align*}
Since $f$ is convex, and $w^*$ is a minimum of $f$ in the simplex,
\begin{equation}\label{eq:theorem_2_ans}
    \frac{d}{dt}D(w^*|w^t)\ \leq\ f(w^*) - f(w^t) \leq 0.
\end{equation}
\end{proof}

\begin{theorem}
{Let $f$ be real-valued, convex and continuously differentiable w.r.t. $w^t$, $w^t$ continuously differentiable w.r.t. $t$, and $w^*\in \Delta^n$ a solution to (\ref{eq:constrained_optimization_problem}).} For $w^0=(1/n, \ldots, 1/n)$ and finite $T>0$, the Cauchy-Simplex gradient flow (\ref{eq:cauchy_simplex_dw_dt}) gives the bound
\begin{align*}
    f(w^T) - f(w^*)\ \leq\ \frac{\log(n)}{T}.
\end{align*}
\end{theorem}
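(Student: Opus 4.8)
The plan is to combine the two preceding theorems: the monotone decrease of $f$ along the flow (Theorem \ref{thm:decreasing}) and the differential inequality for the relative entropy established in the previous theorem. The natural starting point is the relation proved there, namely
\[
\frac{d}{dt} D(w^*|w^t) = \nabla f \cdot (w^* - w^t) \leq f(w^*) - f(w^t),
\]
where the inequality is exactly convexity of $f$. First I would integrate this in time from $t=0$ to $t=T$. The left-hand side telescopes to $D(w^*|w^T) - D(w^*|w^0)$, yielding
\[
D(w^*|w^T) - D(w^*|w^0) \leq T\, f(w^*) - \int_0^T f(w^t)\,dt.
\]

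Next I would invoke Theorem \ref{thm:decreasing}, which guarantees that $f(w^t)$ is non-increasing, so that $f(w^t) \geq f(w^T)$ for every $t\in[0,T]$ and hence $\int_0^T f(w^t)\,dt \geq T\, f(w^T)$. Substituting this lower bound and discarding the non-negative term $D(w^*|w^T)\geq 0$ (Gibbs' inequality) gives, after rearranging,
\[
T\big(f(w^T) - f(w^*)\big) \leq D(w^*|w^0).
\]

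Finally, everything reduces to evaluating the initial relative entropy at the uniform point $w^0=(1/n,\ldots,1/n)$. A direct computation gives
\[
D(w^*|w^0) = \sum_{w^*_i\neq 0} w^*_i\log(n\,w^*_i) = \log(n) + \sum_{w^*_i\neq 0} w^*_i\log(w^*_i) \leq \log(n),
\]
where I used $\sum_i w^*_i = 1$ and the non-negativity of the Shannon entropy $-\sum_i w^*_i\log(w^*_i)$. Dividing by $T$ produces the claimed bound $f(w^T)-f(w^*)\leq \log(n)/T$.

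I expect the only genuinely delicate point to be ensuring that both differential statements remain valid along the entire trajectory: the integration requires $w^t \in \text{int}(\Delta^n)$ for all $t\in[0,T]$, which is precisely what the earlier remark provides, since the flow preserves the interior whenever $w^0$ lies in it (and the uniform point does). Everything else is routine bookkeeping — the telescoping integration, the monotonicity substitution, and the elementary identity $D(w^*|w^0) = \log(n) - H(w^*)$ together with $H(w^*)\geq 0$.
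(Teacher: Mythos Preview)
Your proof is correct and follows essentially the same approach as the paper: integrate the entropy differential inequality $\frac{d}{dt}D(w^*|w^t)\le f(w^*)-f(w^t)$ over $[0,T]$, drop the non-negative term $D(w^*|w^T)$, use the monotonicity of $f(w^t)$ from Theorem~\ref{thm:decreasing} to replace the time-average by $f(w^T)$, and bound $D(w^*|w^0)\le\log(n)$ at the uniform initializer. The paper's argument is the same up to the order in which the monotonicity and non-negativity steps are applied, and your explicit computation $D(w^*|w^0)=\log(n)-H(w^*)$ together with the remark on interior preservation is a welcome clarification.
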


\begin{proof}
Taking $u = w^*$ and integrating (\ref{eq:theorem_2_ans}) w.r.t. $t$ gives
\begin{equation*}
    \int^T_0\bigg(f(w^t) - f(w^*)\bigg)\, dt\ \leq\ -\int^T_0 \frac{d}{dt}D(w^*|w^t)\, dt\ =\ D(w^* | w^0) - D(w^* | w^T).
\end{equation*}

By Jensen's inequality, the relative entropy can be bounded from below by
\begin{align*}
    D(u|v)\ =\ - \sum_{u_i\neq 0} u_i\log(v_i / u_i)\ \geq\ - \log\left(\sum_{u_i\neq 0} v_i \right)\ \geq\ 0,
\end{align*}
for $u\in\Delta^n$ and $v\in\text{relint}(\Delta^n)$. This can also be shown using the Bergman divergence \citep{chou_2023}.

Thus, relative entropy is non-negative \citep{jaynes_probability, gibbs_statistical_mechanics}, yielding
\begin{equation*}
    \int^T_0\bigg(f(w^t) - f(w^*)\bigg)\, dt\  \leq\ D(w^* | w^0).
\end{equation*}
Completing the integral on the left side of the inequality and dividing by $T$ gives
\begin{equation*}
    \frac{1}{T}\int^T_0 f(w^t)\, dt - f(w^*)\ \leq\ \frac{1}{T}D(w^*|w^0).
\end{equation*}
Using Theorem \ref{thm:decreasing}, $f(w^t)$ is a decreasing function. Thus 
\begin{equation*}
    f(w^T)\ =\ \frac{1}{T}\int^T_0 f(w^T)\, dt\ \leq\ \frac{1}{T}\int^T_0 f(w^t)\, dt.
\end{equation*}

{Let $w^0=(1/n,\ldots, 1/n)$, then the relative entropy can be bounded by 
\footnote{{Intuitively, this can be seen by computing the relative entropy between a state with maximal entropy (the uniform distribution) and minimal entropy (the Dirac mass).}}
\begin{align*}
    D(u|w^0)\ =\ \sum_{u_i\neq 0} u_i \log(u_i)\, +\, \log(n)\ \leq\ \log(n),\qquad\text{for all}\qquad u\in\Delta^n.
\end{align*}
}
This gives the required bound
\begin{equation*}
    f(w^T) - f(w^*)\ \leq\ \frac{D(w^*|w^0)}{T} \leq \frac{\log(n)}{T}.
\end{equation*}
\end{proof}

\begin{theorem}[Convergence of Linear Scheme]\label{thm:convergence_of_linear_scheme}
Let $f$ be a differentiable convex function that obtains a minimum at $w^*$ with $\nabla f$ $L$-Lipschitz continuous. Let $w^0 = (1/n,\ldots, 1/n)$ and $\{\eta^t\}_{t=0}^{T-1}$ be a sequence that satisfies $0<\eta^t < \min\{1/L, \eta^{t,\max}\}$ and \footnote{Note that $C_{\gamma_t}$ is an increasing function of $\gamma_t$, with $C_{\gamma_t}\geq0$ for $\gamma_t\in[0, 1]$. Thus $\gamma_t=\eta^t/\eta^{t,\max}$ can also be chosen to satisfy (\ref{eq:appendix_c_gamma_inequality}) and that $\{\eta^t\}_t$ is a sequence with $0< \eta^t < \min\{1/L, \eta^{t,\max}\}$.}
\begin{align}\label{eq:appendix_c_gamma_inequality}
    C_{\gamma_t}\ \leq\ \frac{w^t\cdot (\Pi^t \nabla_w f(w_t))^2}{2 \max_i (\Pi^t \nabla_w f(w_t))^2_i},
\end{align}
with $\gamma_t = \eta^t/\eta^{t,\max}$, $C_{\gamma_t} = \gamma_t^{-2}\log(e^{-\gamma_t}/(1 - \gamma_t))$ and $\eta^{t,\max}$ defined in (\ref{eq:discrete_time_scheme}). Then the linear Cauchy-Simplex scheme (\ref{eq:discrete_time_scheme}) produces iterates $\{w^t\}_{t=0}^{T-1}$ such that, for finite integer $T > 0$,
\begin{align*}
    f(w^T) - f(w^*)\ \leq\ \frac{\log(n)}{\sum_{t=0}^{T-1}\eta^t}.
\end{align*}
\end{theorem}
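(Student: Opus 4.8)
The plan is to reproduce the continuous-time relative-entropy argument at the discrete level, using $D^t := D(w^*\,|\,w^t)$ as a Lyapunov potential and $\delta_t := f(w^t) - f(w^*) \geq 0$ as the quantity to be controlled. The whole proof is organized around massaging the per-step change of $D^t$ into the single clean recursion $\eta_t\big[(t+1)\delta_{t+1} - t\,\delta_t\big] \leq D^t - D^{t+1}$, because both sides of this inequality telescope on summation: the right side to $D^0 \leq \log n$, and the left side to something proportional to $T\eta_T\delta_T$.

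First I would write the linear scheme (\ref{eq:discrete_time_scheme}) coordinate-wise as $w_i^{t+1} = w_i^t\big(1 - \eta_t(\Pi^t\nabla f^t)_i\big)$, where $\eta_t \leq \eta_{t,\max}$ keeps each factor positive so $D^{t+1}$ is defined, giving $D^{t+1} - D^t = -\sum_i w_i^*\log\!\big(1 - \eta_t(\Pi^t\nabla f^t)_i\big)$. The key elementary estimate is $-\log(1-x) \leq x + C_{\gamma_t}x^2$ for all $x \leq \gamma_t$: since $C_{\gamma_t}$ equals $x^{-2}(-\log(1-x)-x)$ evaluated at $x=\gamma_t$, and this ratio is increasing in $x$, any $x \leq \gamma_t$ is dominated by $C_{\gamma_t}$. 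Applying this with $x = \eta_t(\Pi^t\nabla f^t)_i \leq \gamma_t$ and summing against $w^*$ yields $D^{t+1} - D^t \leq \eta_t\,\nabla f\cdot(w^*-w^t) + C_{\gamma_t}\eta_t^2\sum_i w_i^*(\Pi^t\nabla f^t)_i^2$. Convexity turns the first term into $-\eta_t\delta_t$ exactly as in (\ref{eq:theorem_2_ans}), and $\sum_i w_i^* = 1$ bounds the second by $C_{\gamma_t}\eta_t^2\max_i(\Pi^t\nabla f^t)_i^2$.

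Next I would tame the quadratic error using smoothness. Since $w^{t+1}-w^t = -\eta_t W\Pi^t\nabla f$ in the notation of (\ref{eq:cs_dw}), the $L$-Lipschitz descent lemma together with $\|w^{t+1}-w^t\|^2 \leq \eta_t^2\,w^t\cdot(\Pi^t\nabla f^t)^2$ (using $w_i^t \leq 1$) and $\eta_t \leq 1/L$ gives the discrete decrease $\tfrac{\eta_t}{2}\,w^t\cdot(\Pi^t\nabla f^t)^2 \leq \delta_t - \delta_{t+1}$, a discrete analogue of Theorem \ref{thm:decreasing}. Reading hypothesis (\ref{eq:appendix_c_gamma_inequality}) as $C_{\gamma_t}\max_i(\Pi^t\nabla f^t)_i^2 \leq \tfrac{t+1}{2}\,w^t\cdot(\Pi^t\nabla f^t)^2$ then bounds the error term by $\eta_t(t+1)(\delta_t-\delta_{t+1})$. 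Substituting and rearranging collapses the two pieces into the target recursion $\eta_t\big[(t+1)\delta_{t+1} - t\,\delta_t\big] \leq D^t - D^{t+1}$.

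Finally I would sum over $t = 0,\ldots,T-1$: the right side telescopes to $D^0 - D^T \leq D^0 \leq \log n$ (from $w^0 = (1/n,\ldots,1/n)$). For constant step size the left side telescopes straight to $\eta\,T\delta_T$; for a decreasing sequence I would use summation by parts, noting that the partial sums of $(t+1)\delta_{t+1}-t\delta_t$ equal $(k+1)\delta_{k+1}\geq 0$ and that $\eta_t-\eta_{t+1}\geq 0$, to conclude $\sum_t \eta_t\big[(t+1)\delta_{t+1}-t\delta_t\big] \geq \eta_{T-1}\,T\delta_T \geq \eta_T\,T\delta_T$. This gives $\eta_T\,T\delta_T \leq \log n$, which is the claimed bound. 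I expect the two genuinely delicate points to be (i) justifying the logarithmic inequality for \emph{negative} arguments, where one must confirm that the ratio defining $C_x$ stays monotone increasing on all of $(-\infty,1)$ and not merely on $[0,1]$ as the footnote states, and (ii) executing the summation by parts so that the decreasing step sizes reinforce rather than spoil the telescoping; the constant-step case makes the mechanism transparent, and the decreasing case rests entirely on the sign of $\eta_t-\eta_{t+1}$.
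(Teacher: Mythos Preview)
Your proof is correct and uses the same three ingredients as the paper --- the relative-entropy difference bound (the paper's Theorem~\ref{thm:appendix_relative_entropy_diff}), the descent lemma (the paper's Theorem~\ref{thm:discrete_decreasing}), and the hypothesis~(\ref{eq:appendix_c_gamma_inequality}) --- but organizes them differently. The paper first \emph{iterates} the descent lemma to replace $f(w^t)$ by $f(w^T)+\tfrac{1}{2}\sum_{k=t}^{T-1}\eta_k\,w^k\!\cdot(\Pi^k\nabla f^k)^2$, then sums over $t$; this produces a double sum of variance terms which is rearranged, bounded via $\sum_{k\le t}\eta_k\geq (t+1)\eta_t$ (decreasing steps), and killed by~(\ref{eq:appendix_c_gamma_inequality}), leaving $(f(w^T)-f(w^*))\sum_t\eta_t\leq D^0$. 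You instead apply the descent lemma only once per step, feeding $\tfrac{\eta_t}{2}w^t\!\cdot(\Pi^t\nabla f^t)^2\leq\delta_t-\delta_{t+1}$ directly into the quadratic error to obtain the single recursion $\eta_t\big[(t+1)\delta_{t+1}-t\delta_t\big]\leq D^t-D^{t+1}$, and then let Abel summation do the work that the double-sum manipulation does in the paper. Your route is tidier: it avoids the double sum entirely and makes the role of the $(t+1)$ factor in~(\ref{eq:appendix_c_gamma_inequality}) transparent --- it is exactly what turns $\delta_t-(t+1)(\delta_t-\delta_{t+1})$ into a telescope. The paper's route, on the other hand, lands on $(f(w^T)-f(w^*))\sum_{t}\eta_t\leq \log n$, which is marginally sharper than your $\eta_T T\delta_T\leq\log n$ when the step sizes vary, though both yield the stated bound. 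Your flagged concern~(i) about the logarithmic inequality for negative arguments is exactly the content of the paper's Theorem~\ref{thm:appendix_relative_entropy_diff}, which asserts $0\le\log(e^{-x}/(1-x))\le C_{\gamma_t}x^2$ for all $x<\gamma_t$; the monotonicity of $x\mapsto x^{-2}(-x-\log(1-x))$ on $(-\infty,1)$ does indeed need checking and the paper takes it as elementary.
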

The proof can be found in Appendix \ref{sec:appendix_discrete_proof}.

In practice, finding $\eta_t$, and by extension $\gamma_t$, to satisfy the assumptions of Theorem \ref{thm:convergence_of_linear_scheme} can be hard. Instead, we show asymptotic convergence of the linear Cauchy-Simplex scheme under a line search. 
\begin{lemma}[Asymptotic Convergence of Linear Scheme]\label{lemma:asymptotic_convergence}
    Let $f$ be a differentiable convex function with $\nabla f$ Lipschitz continuous. The linear Cauchy-Simplex (\ref{eq:discrete_time_scheme}) has asymptotic convergence when $\eta_t$ is chosen through a line search, \textit{i.e.},
    \begin{align*}
        f(w^{t+1})\ =\ \min_{\eta^t\in [0, \eta^{t,\max}-\varepsilon]}f(w^t -\eta^t d^t),
    \end{align*}
    for some $0<\varepsilon\ll 1$ and $d^t_i=w_t^t(\Pi^t\nabla f^t)_i$. That is $f(w^t)\to f(w^*)$ as $t\to\infty$.
\end{lemma}
The proof can be found in Appendix \ref{sec:asymptotic_convergence}.

\section{Extension: Optimization over Orthogonal Matrices}\label{sec:orthogonal_matrix}
Another often explored constraint is the orthogonal matrix constraint 
\begin{align*}
    \min_{Q \in \mathcal{S}^n} f(Q),\quad\text{where}\quad \mathcal{S}^n\ =\ \{Q \in \mathbb{R}^{n \times n}\ |\ Q Q^T\ =\ I\},
\end{align*}
with $\mathcal{S}^n$ also known as the Stiefel manifold. To name a few, this problem occurs in blind source separation \citep{source_separation_1, source_separation_2}, principle component analysis \citep{sparse_pca}, and neural networks \citep{orthogonal_nn_2, orthogonal_nn, Woodworth_2020, chou2024robust}.

Various iterative methods have been suggested to solve this problem, with them split up between Riemannian optimization, which uses expensive iterations that remain inside the Stiefel manifold \citep{orthogonal_matrix_retraction, orthogonal_matrix_retraction_2}, and landing methods \citep{orthogonal_matrix_landing}, which use cheap iterations that are not in the Stiefel manifold but over time will be arbitrarily close to the manifold. 

Using a similar method as in Section \ref{sec:algorithm}, we can also derive an explicit scheme that preserves orthogonality up to an arbitrary accuracy. 

Let $X\in\mathbb{R}^{n\times n}$ be an anti-symmteric matrix ($X = -X^T$). Then the Cayley transform
\begin{align*}
    Q\ =\ (I - X)(I + X)^{-1}\ =\ 2(I - X)^{-1} - I,
\end{align*}
parameterizes orthogonal matrices with $\text{Det}(Q) = 1$, where $I$ is the identity matrix. Performing similar calculations as above yields the gradient flow
\begin{align*}
    \frac{dQ}{dt}\ =\ -\eta\, \Omega(\Lambda \Omega^T - \Omega \Lambda^T)\Omega
\end{align*}
where $\Omega = Q + I$, and $\Lambda = \Omega^T\, \partial_Q f$ (full derivation can be found in Appendix \ref{sec:cayley_transform_derivation}).

However, an Euler discretization of this differential equation does not produce a scheme that preserves orthogonality. Instead, we consider a corrected iteration scheme
\begin{align*}
    Q_{t+1}\ =\ (I + C)(Q_t - \eta dQ)\quad\text{where}\quad dQ\ =\ \Omega(\Lambda \Omega^T - \Omega \Lambda^T)\Omega,
\end{align*}
for some matrix $C$. An iteration scheme that preserves orthogonality up to arbitrary accuracy can then be made by looking at the coefficients for powers of $\eta$ in the expansion of $Q_{t+1}Q_{t+1}^T$ and solving for $C$. In particular, a $\mathcal{O}(\eta^2)$ correct scheme can be written as
\begin{align*}
    Q_{t+1}\ =\ (I - \frac{\eta^2}{2} dQ\, dQ^T)(Q_t - \eta dQ),
\end{align*}
and a $\mathcal{O}(\eta^4)$ correct scheme is
\begin{align*}
    Q_{t+1}\ =\ (I - \frac{\eta^2}{2} dQ\, dQ^T + \frac{3\eta^4}{8}dQ\, dQ^T\, dQ\, dQ^T)(Q_t - \eta dQ).
\end{align*}

As such, parameterizing the manifold by a latent Euclidean space yields a gradient flow that remains explicitly inside the Stiefel manifold. Moreover, this gradient flow is explicitly in terms of the orthogonal matrix. This contrasts with Riemannian optimization methods, which use Riemannian gradient flow on the manifold. Moreover, since the tangent space at each point of the Stiefel manifold does not lie in the manifold itself, once the scheme is discretized, an exponential map (or retraction) must be used to remain on the manifold. These often involve costly matrix operations like exponentials, square roots, and inversions \citep{orthogonal_matrix_retraction_2, orthogonal_matrix_square_root_rectraction, lezcano_2019}. In contrast, working in Euclidean space allows the discretized scheme to remain on the manifold using cheaper addition and multiplication matrix operations. 

\section{Applications}\label{sec:applications}
As noted in Section \ref{sec:on_the_learning_rate}, approximations of the Lipschitz constant $L$ can cause failure of convergence. As such, our experiments use a line search outlined in Lemma \ref{lemma:asymptotic_convergence}. 

\subsection{Projection onto the Convex Hull}
Projection onto a convex hull arises in many areas like machine learning \citep{jmlr_convex_projection_2, jmlr_convex_projection_3, jmlr_convex_projection}, collision detection \citep{CHA_collision_detection} and imaging \citep{CHA_imaging_2, jmlr_convex_imaging}. It involves finding a point in the convex hull of a set of points $\{x_i\}_{1\leq i\leq n}$, with $x_i\in\mathbb{R}^d$, that is closest to an arbitrary point $y\in\mathbb{R}^d$, \textit{i.e.},
\begin{equation*}
\min_w \|wX - y\| ^ 2 \qquad \textrm{where} \qquad \sum_iw_i\ =\ 1\ \textrm{and}\ w_i\ \geq\ 0, 
\end{equation*}
and $X=[x_1, \cdots, x_n]^T$ is a $\mathbb{R}^{n\times d}$ matrix. This is also known as simplex-constrained regression.

\textbf{Experimental Details}: We look at a convex hull sampled from the unit hypercube $[0, 1]^d$ for $d \in [10, 15, \ldots, 50]$. For each hypercube, we sample 50 points uniformly on each of its surfaces, giving a convex hull $X$ with $n=100d$ data points. 

Once $X$ is sampled, 50 $y$'s are created outside the hypercube perpendicular to a surface and unit length away from it. This is done by considering the 50 points in $X$ lying on a randomly selected surface of the hypercube. A point $y_\text{true}$ is created as a random convex combination of these points. The point $y$ can then be created perpendicular to this surface and a unit length away from $y_\text{true}$, and thus also from the convex hull of $X$.

Each $y$ is then projected onto $X$ using CS, EGD, and PFW. These algorithms are ran until a solution, $\hat{y}$, is found such that $\|\hat{y} - y_\text{true}\|\leq 1e^{-5}$ or $10\,000$ iterations have been made. We do not implement PGD and Frank-Wolfe due to their inefficiency in practice. 

\textbf{Implementation Details}: 

The learning rate for EGD, PFW, and CS is found through a line search. In the case of the PFW and CS algorithms, an explicit solution can be found and used. At the same time, EGD implements a back-tracking linear search with Armijo conditions \citep{numerical_optimization_book} to find an approximate solution.

Experiments were written in Python and ran on Google Colab. The code can be found on GitHub\footnote{https://github.com/infamoussoap/ConvexHull}. The random data is seeded for reproducibility.

\textbf{Results}: The results can be seen in Fig. \ref{fig:results}. For $d=10$, we see that PFW outperforms both CS and EGD in terms of the number of iterations required and the time taken. But for $d>10$, on average, CS converges with the least iterations and time taken.
 
\begin{figure}[tbhp]
\centering
\subfloat[Average steps taken in log-scale]{\label{fig:a}\includegraphics[width=0.48\textwidth]{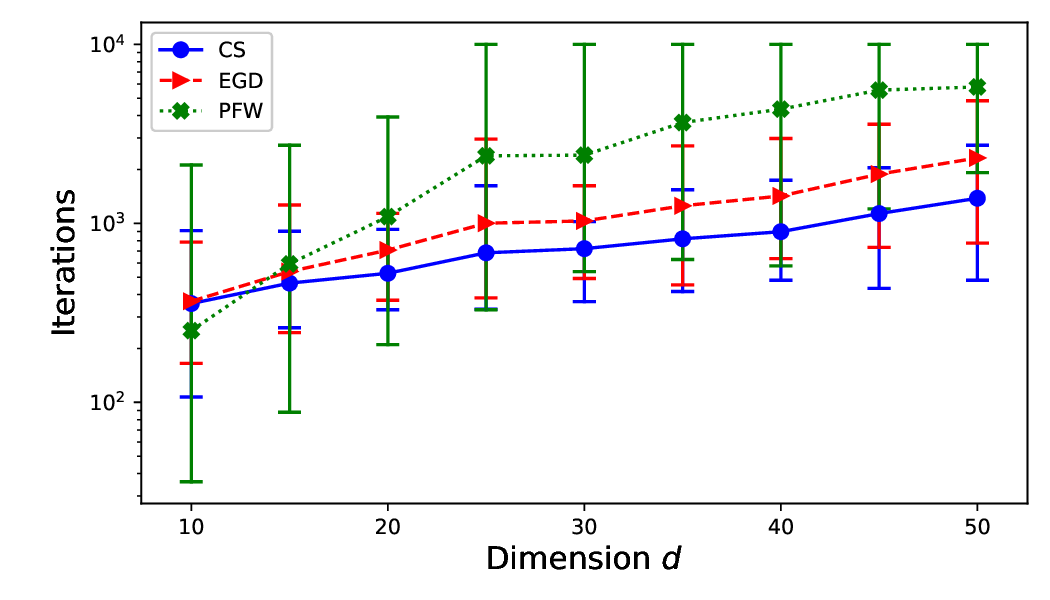}}
\subfloat[Average time taken in log-scale]{\label{fig:b}\includegraphics[width=0.48\textwidth]{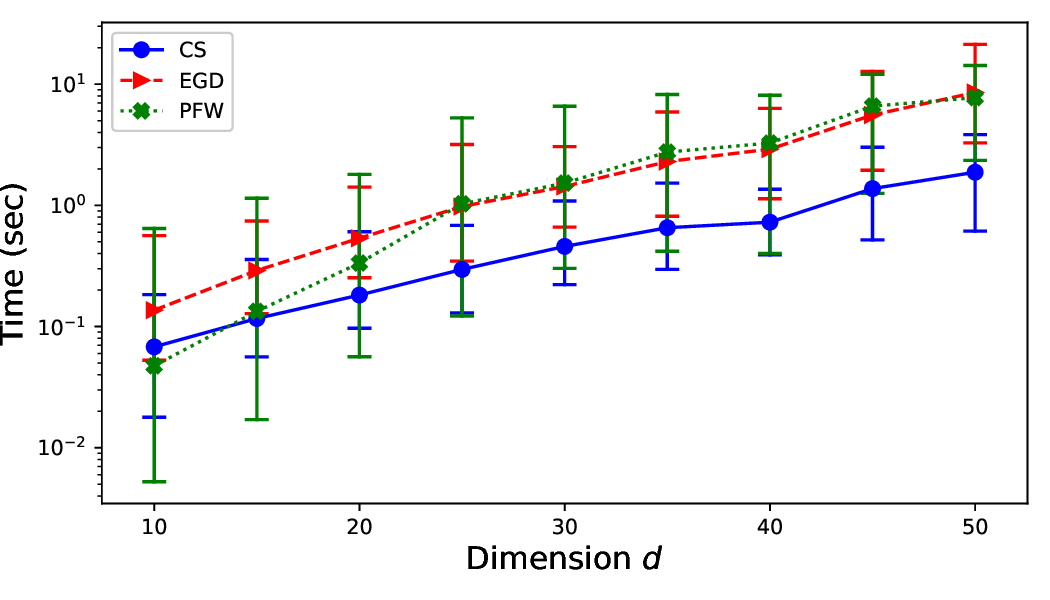}}
\caption{Number of steps and time required for PFW, EGD, and CS to project 50 randomly sampled points onto the $d$-hypercube. The bars indicate the minimum and maximum values.}
\label{fig:results}
\end{figure}

\subsection{Optimal Question Weighting}\label{section:optimal_question_weighting}
It is often desirable that the distribution of exam marks matches a target distribution, but this rarely happens. Modern standardized tests (e.g. IQ exams) solve this problem by transforming the distribution of the raw score of a given age group so it fits a normal distribution \citep{iq_Bartholomew, iq_Gottfredson, iq_makintosh}.

While IQ exams have many criticisms \citep{iq_test_bad_3, iq_test_bad_1, iq_test_bad_2}, we are interested in the raw score. As noted by \cite{iq_Gottfredson}, the raw score has no intrinsic meaning as it can be boosted by adding easier questions to the test. We also argue it is hard to predict the difficulty of a question relative to an age group and, thus, even harder to give it the correct weight. Hence making the raw score a bad reflection of a person's performance.

Here we propose a framework to find an optimum weighting of questions such that the weighted scores will fit a target distribution. A demonstration can be seen in Fig. \ref{fig:mark_distribution}.

Consider $d$ students taking an exam with $n$ true or false questions. For simplicity, assume that person $j$ getting question $i$ correct can be modeled as a random variable $\mathcal{X}_{i, j}$ for $i=1,\ldots, n$ and $j=1,\ldots, d$. Consider the discrete distribution of $X_j = \sum_i w_i{\mathcal{X}}_{i,j}$ for some $w\in\Delta^n$, the weighted mark of person $j$. This distribution can be approximated as a continuous distribution,
\begin{equation*}
    \rho_\varepsilon(z)\ =\ \frac{1}{d}\sum_{j=1}^d \mu_\varepsilon(z - X_j)\quad \text{with}\quad \mu_\varepsilon(x)\ =\ \frac{1}{\varepsilon}\mu(x/\varepsilon),
\end{equation*}
$\varepsilon>0$ and $\mu$ is a continuous probability distribution, i.e. $\mu \geq 0$ and $\int \mu(x) dx = 1$. This is also known as kernel density estimation (KDE).

We want to minimize the distance between $\rho_\varepsilon$ and some target distribution $f$. A natural choice is the relative entropy,
\begin{equation*}
    \min_{w\in \Delta^n} D(\rho_\varepsilon | f)\ =\ \min_{w\in \Delta^n} \int \rho_\varepsilon(x)\log\left(\frac{\rho_\varepsilon(x)}{f(x)}\right)\, dx,
\end{equation*}
of which we take its Riemann approximation,
\begin{align}\label{eq:mark_distribution_discrete}
    \min_{w\in \Delta^n} \hat{D}(\rho_\varepsilon | f)\ =\ \min_{w\in \Delta^n} \sum_{k=1}^M\rho_\varepsilon(x_k)\log\left(\frac{\rho_\varepsilon(x_k)}{f(x_k)}\right) (x_k - x_{k-1}),
\end{align}
where $\{x_k\}_{0\leq k\leq M}$ is a partition of a finite interval $[a,b]$. 

We remark that this problem is not convex, as $\mu$ cannot be chosen to be convex w.r.t. $w$ and be a probability distribution.

This is similar to robust location parameter estimation \citep{robust_location_parameter, robust_statistics_textbook}, which considers data sampled from a contaminated distribution (\textit{i.e.} an uncertain mixture of two known distributions), for instance, when a predicting apparatus failures from two populations. Specifically for some contamination level $0 < \alpha \ll 1$, data was sampled at a rate of $1-\alpha$ from a distribution, $G$ (\textit{i.e.} easy questions), and $\alpha$ from an alternate distribution, $H$ (\textit{i.e.} hard questions). Thus, data is sampled from the distribution
\begin{align*}
    F\ =\ (1-\alpha)G + \alpha H.
\end{align*}

Under this contaminated distribution, robust parameter estimation finds point estimates (e.g., mean and variance) that are robust when $\alpha$ is small. In this sense, robust parameter estimation is interested in finding weights, $w$, that make the KDE, $\rho_{\varepsilon}(z)$, robust to contamination levels $\alpha$. Instead, we wish to make the KDE match a target distribution.

\begin{figure}[tbhp]
\centering
\includegraphics[width=0.98\textwidth]{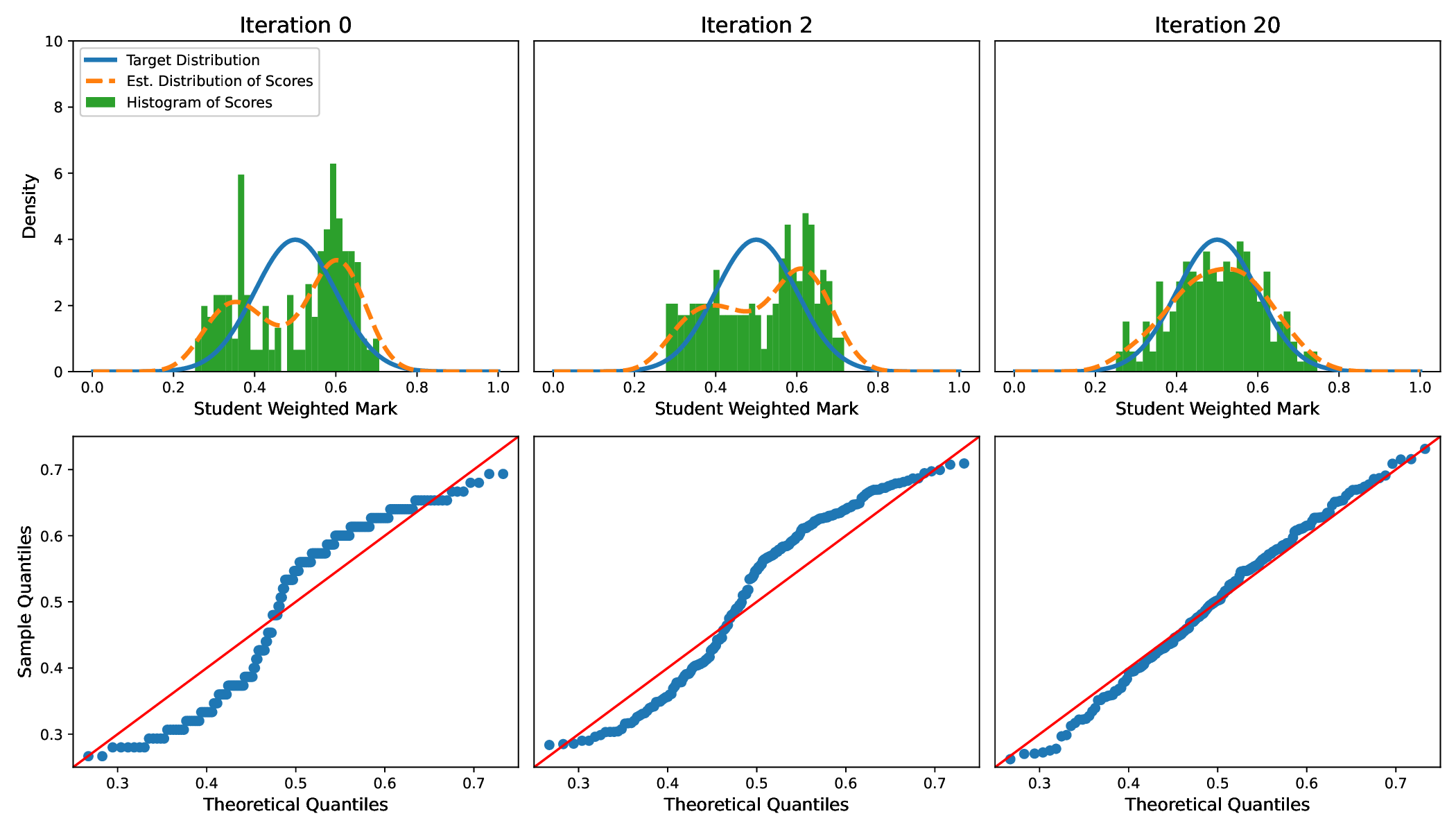}
\caption[LoF entry]{Optimal question weighting for (randomly generated) exam scores with 200 students and 75 questions. The setup follows the experimental details in Section \ref{section:optimal_question_weighting}. The kernel density estimate uses a truncated unit normal distribution with $\varepsilon=0.05$, and the target distribution is a truncated normal distribution with a mean of 0.5 and a standard deviation of 0.1. We take $w^0=(0.01,\ldots, 0.01)$, and the Cauchy-Simplex is applied, with each step using a backtracking line search. The resulting weighted histogram and kernel density estimate is shown.

The distribution of the weighted marks is shown on the top row, and its QQ plots against a normal distribution of mean 0.5 and a standard deviation of 0.1 are shown on the bottom row.

At iterations 0, 5, and 20, the weighted scores have a mean of 0.499, 0.514, and 0.501, with standard deviations of 0.128, 0.124, and 0.109, respectively.}
\label{fig:mark_distribution}
\end{figure}

\textbf{Experiment Details}: We consider 25 randomly generated exam marks, each having $d=200$ students taking an exam with $n=75$ true or false questions. For simplicity, we assume that $\mathcal{X}_{i,j}\sim \text{Bernoulli}(q_i s_j)$ where $0<q_i<1$ is the difficulty of question $i$ and $0<s_j<1$ the $j$-th student's smartness.

For each scenario, $q_i = 7/8$ for $1\leq i\leq 60$ and $q_i=1/5$ for $60 < i \leq 75$, while $s_j = 7/10$ for $1\leq j\leq 120$ and $s_j=1/2$ for $120 < j \leq 200$. ${\mathcal{X}}_{i,j}\sim\text{Bernoulli}(q_is_j)$ are then sampled. This setup results in a bimodal distribution, with an expected average of $0.532$ and an expected standard deviation of $0.1206$, as shown in Figure \ref{fig:mark_distribution}.

For the kernel density estimate, $\mu(x)$ is chosen as a unit normal distribution truncated to $[0, 1]$, with smoothing parameter $\varepsilon=0.05$. Similarly, $f$ is a normal distribution with mean $0.5$ and variance $0.1$, truncated to $[0, 1]$. We take the partition $\{k/400\}_{0\leq k\leq 400}$ for the Riemann approximation (\ref{eq:mark_distribution_discrete}).

The algorithms CS, EGD, and PFW are ran for 150 iterations.

\textbf{Implementation Details}: The learning rate for EGD, PFW, and CS is found through a line search. However, explicit solutions are not used. Instead, a back-tracking line search with Armijo conditions is used to find an approximate solution.

Experiments were written in Python and ran on Google Colab and can be found on GitHub\footnote{https://github.com/infamoussoap/ConvexHull}. The random data is seeded for reproducibility.

\textbf{Results}: A table with the results can be seen in Table \ref{tab:optimal_sample_weighting}. In summary, of the 25 scenarios, PFW always produces solutions with the smallest relative entropy, with CS producing the largest relative entropy 13 times and EGD 12 times. For the time taken to make the 150 steps, PFW is the quickest 15 times, EGD 7 times, and CS 3 times. At the same time, EGD is the slowest 13 times, CS 7 times, and PFW 5 times.

It is perhaps expected that PFW outperforms both EGD and CS due to the low dimensionality of this problem. However, the CS produces similar relative entropies to EGD while maintaining a lower average run time of 5.22 seconds compared to the average run time of 5.68 sec for EGD. 

\begin{table}[]
\caption{Results for optimal question weighting after running CS, EGD, and PFW for 150 iterations. The final distance (relative entropy) and the time taken in seconds are shown. Bold represents the minimum (best) value, and underline represents the maximum (worst) value.}
\label{tab:optimal_sample_weighting}
\centering
\begin{tabular}{l|cc|cc|cc}
\toprule
 & \multicolumn{2}{c|}{CS}&\multicolumn{2}{c|}{EGD} &\multicolumn{2}{c}{PFW} \\
\midrule
Trial & Distance & Time & Distance & Time & Distance & Time \\
\midrule
1 & \underline{0.032432} & 8.14 & 0.032426 & \underline{10.98} & \textbf{0.032114} & \textbf{6.02} \\
2 & 0.010349 & \underline{5.95} & \underline{0.010535} & 5.08 & \textbf{0.010101} & \textbf{4.46} \\
3 & 0.016186 & \underline{5.74} & \underline{0.016252} & \textbf{4.93} & \textbf{0.015848} & 5.50 \\
4 & 0.025684 & 4.62 & \underline{0.025726} & \underline{6.19} & \textbf{0.025309} & \textbf{4.51} \\
5 & \underline{0.020561} & 4.63 & 0.020486 & \underline{6.03} & \textbf{0.020213} & \textbf{4.50} \\
6 & \underline{0.016559} & \underline{5.58} & 0.016514 & 5.00 & \textbf{0.016287} & \textbf{4.52} \\
7 & \underline{0.025957} & \underline{5.42} & 0.025867 & \textbf{4.87} & \textbf{0.025757} & 5.38 \\
8 & \underline{0.014506} & 4.77 & 0.014343 & \underline{6.14} & \textbf{0.013504} & \textbf{4.28} \\
9 & 0.032221 & 4.68 & \underline{0.032412} & \underline{6.02} & \textbf{0.032028} & \textbf{4.55} \\
10 & 0.023523 & \underline{5.59} & \underline{0.023528} & \textbf{4.92} & \textbf{0.023232} & 5.34 \\
11 & 0.016153 & \textbf{4.93} & \underline{0.016231} & 5.01 & \textbf{0.015792} & \underline{5.63} \\
12 & 0.035734 & 4.53 & \underline{0.035738} & \underline{6.07} & \textbf{0.035212} & \textbf{4.22} \\
13 & 0.030205 & 4.53 & \underline{0.030234} & \underline{6.13} & \textbf{0.029859} & \textbf{4.37} \\
14 & \underline{0.021725} & 5.80 & 0.021598 & \textbf{4.99} & \textbf{0.021282} & \underline{5.85} \\
15 & \underline{0.030026} & \textbf{4.44} & 0.029982 & 4.99 & \textbf{0.029751} & \underline{5.64} \\
16 & \underline{0.009212} & 4.75 & 0.009182 & \underline{5.94} & \textbf{0.008931} & \textbf{4.27} \\
17 & 0.015573 & 5.22 & \underline{0.015661} & \underline{5.46} & \textbf{0.015188} & \textbf{4.48} \\
18 & \underline{0.017681} & \underline{5.69} & 0.017618 & \textbf{4.92} & \textbf{0.017321} & 5.48 \\
19 & \underline{0.017888} & \textbf{4.64} & 0.017874 & \underline{5.34} & \textbf{0.017283} & 5.11 \\
20 & 0.013597 & 4.55 & \underline{0.013719} & \underline{6.04} & \textbf{0.013075} & \textbf{4.42} \\
21 & \underline{0.016933} & \underline{5.76} & 0.016780 & 4.84 & \textbf{0.016687} & \textbf{4.77} \\
22 & \underline{0.032185} & 5.80 & 0.032141 & \textbf{5.03} & \textbf{0.032039} & \underline{6.03} \\
23 & \underline{0.018377} & 4.69 & 0.018250 & \underline{6.06} & \textbf{0.018084} & \textbf{4.54} \\
24 & 0.031167 & 4.59 & \underline{0.031211} & \underline{6.10} & \textbf{0.030820} & \textbf{4.55} \\
25 & 0.035608 & 5.46 & \underline{0.035674} & \textbf{4.98} & \textbf{0.035408} & \underline{5.98} \\
\midrule
Average & & 5.22 & & 5.68 & & 4.97\\
\bottomrule
\end{tabular}
\end{table}

\subsection{Prediction from Expert Advice}
{Consider $N$ `experts' (e.g., Twitter) who give daily advice for $1\leq t \leq T$ days. By taking advice from expert $i$ on day $t$, we incur a loss $l^t_i\in[0,1]$. However, this loss is not known beforehand, only once the advice has been taken. Since the wisdom of the crowd is typically better than any given expert \citep{Landemore_Elster_2012}, we'd like to take the weighted average of our experts, giving a daily loss of $w^t\cdot l^t$, where $w^t\in \Delta^N$ are the weights given to the experts. This problem is also known as the multi-armed bandit problem.}

A simple goal is to generate a sequence of weight vectors $\{w^t\}_t$ to minimize the averaged 
expected loss. This goal is, however, a bit too ambitious as the loss vectors $l^t$ are not known beforehand. An easier problem is to find a sequence, $\{w^t\}_t$, such that its averaged expected loss approaches the average loss of the best expert as $T\to\infty$, that is
\begin{align*}
    \frac{1}{T}R_T \to 0, \quad \text{where} \quad R_T\ =\ \sum_{t=1}^Tw^t\cdot l^t - \min_i \sum_{t=1}^T l^t_i 
\end{align*}
as $T\to\infty$. $R_T$ is commonly known as the regret of the strategy $\{w^t\}_t$.

Previous works \citep{weighted_majority_algorithm, using_expert_advice, adaboosting, survey_multiplicative_weights} all yield $\mathcal{O}(\sqrt{T\log N})$ convergence rate for the regret.

\begin{theorem}\label{thm:expert_advice}
    Consider a sequence of adversary loss vectors $l^t \in [0, 1]^N$. For any $u\in\Delta^N$, the regret generated by the Cauchy-Simplex scheme
    \begin{align*}
        w^{t+1}\ =\ w^t(1 - \eta^t \Pi_{w^t}\nabla f^t)\quad\text{where}\quad \nabla f^t\ =\ l^t,
    \end{align*}
    is bounded by
    \begin{align*}
        \sum_{t=1}^T w^t \cdot l^t - \sum_{t=1}^T u \cdot l^t \ \leq\ \frac{D(u|w^1)}{\eta} + \frac{T\eta}{2(1-\eta)},
    \end{align*}
    for a fixed learning rate $\eta^t = \eta < 1$.
    
    In particular, taking $w^1=(1/N,\ldots, 1/N)$ and $\eta = \frac{\sqrt{2 \log (N)}}{\sqrt{2\log (N)}+\sqrt{T}}$ gives the bound
    \begin{align*}
        \sum_{t=1}^T w^t\cdot l^t - \sum_{t=1}^T u\cdot l^t \ \leq\  \sqrt{2 T \log (N)} + \log (N).
    \end{align*}
    Moreover, this holds when $u=e_j$, where $j$ is the best expert and $e_j$ is the standard basis vector.
\end{theorem}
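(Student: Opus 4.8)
The plan is to track the relative entropy $D(u\,|\,w^t)$ along the discrete iterates, in direct analogy with the gradient-flow proofs above where $D(w^*\,|\,w^t)$ was the natural Lyapunov-type quantity. First I would write the update componentwise as $w^{t+1}_i = w^t_i\,(1 - \eta\,a^t_i)$, where $a^t_i = (\Pi_{w^t} l^t)_i = l^t_i - w^t\cdot l^t$, and record two facts ensuring each iterate remains in $\text{int}(\Delta^N)$: since $l^t_i\in[0,1]$ we have $a^t_i\le 1$, so $\eta<1$ gives $1-\eta a^t_i>0$ (positivity), while $\sum_i w^t_i a^t_i = 0$ forces $\sum_i w^{t+1}_i = 1$ (unit-sum). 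Because $a^t_i\in[-1,1]$ automatically, the constraint $\eta<1$ already implies $\eta\le\eta_{t,\max}$, so no further restriction on the step size is needed.

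Next I would compute the one-step change. Cancelling the $\sum_i u_i\log u_i$ terms gives
\[
    D(u\,|\,w^{t+1}) - D(u\,|\,w^t) = -\sum_i u_i\log\!\left(1 - \eta\,a^t_i\right),
\]
and the key algebraic identity $\sum_i u_i a^t_i = u\cdot l^t - w^t\cdot l^t$ (using $\sum_i u_i = 1$) shows that the first-order content of the logarithm is exactly minus the instantaneous regret $w^t\cdot l^t - u\cdot l^t$. This is the mechanism by which the telescoping entropy difference will convert into a regret sum.

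The main obstacle — the only genuinely non-routine step — is controlling the logarithm uniformly. I would establish the scalar inequality $-\log(1-x) \le x + \frac{x^2}{2(1-\eta)}$ valid for all $x\le\eta<1$, proved by noting the difference and its first derivative vanish at $x=0$ and then splitting the range at the single inflection point $x^\ast = 1-\sqrt{1-\eta}\in[0,\eta]$: the difference is convex (hence nonnegative, with minimum $0$) for $x\le x^\ast$, and concave on $[x^\ast,\eta]$ where it lies above the chord through its two nonnegative endpoints. Applying this with $x = \eta\,a^t_i$, summing against $u_i$, and bounding the second-order term by $\sum_i u_i (a^t_i)^2 \le \max_i (a^t_i)^2 \le 1$ yields
\[
    D(u\,|\,w^{t+1}) - D(u\,|\,w^t) \;\le\; -\eta\left(w^t\cdot l^t - u\cdot l^t\right) + \frac{\eta^2}{2(1-\eta)}.
\]

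Finally I would telescope over $t=1,\dots,T$, discard the nonnegative $-D(u\,|\,w^{T+1})$ term (relative entropy is nonnegative, as used earlier), and divide by $\eta$ to reach the general bound $\frac{D(u\,|\,w^1)}{\eta} + \frac{T\eta}{2(1-\eta)}$. For the sharpened estimate I would use $D(u\,|\,w^1)\le\log N$ for uniform $w^1$ and substitute $\eta = \frac{\sqrt{2\log N}}{\sqrt{2\log N}+\sqrt{T}}$, so that $\tfrac{1}{\eta} = 1 + \sqrt{T}/\sqrt{2\log N}$ and $\tfrac{\eta}{1-\eta} = \sqrt{2\log N}/\sqrt{T}$; both $\frac{\log N}{\eta}$ and $\frac{T\eta}{2(1-\eta)}$ then collapse to $\sqrt{(T\log N)/2}$, summing to $\sqrt{2T\log N}+\log N$. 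The \emph{moreover} clause is immediate: taking $u=e_j$ for the best expert gives $D(e_j\,|\,w^1)=\log N$ exactly and $\sum_t e_j\cdot l^t = \min_i\sum_t l^t_i$, so the left-hand side is precisely $R_T$.
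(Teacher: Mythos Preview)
Your proposal is correct and follows essentially the same route as the paper: compute the one-step change $D(u\,|\,w^{t+1})-D(u\,|\,w^t)=-\sum_i u_i\log(1-\eta\,\Pi^t\nabla_i f^t)$, bound the logarithm by a linear-plus-quadratic term, use $|\Pi^t\nabla_i f^t|\le 1$, telescope, and optimize in $\eta$. The only packaging difference is that the paper routes the logarithm bound through its general lemma (Theorem~\ref{thm:appendix_relative_entropy_diff}) with the constant $C_{\gamma_t}$ and then invokes $\log(e^{-x}/(1-x))/x\le x/(2(1-x))$ at the end, whereas you prove the tailored scalar inequality $-\log(1-x)\le x+\tfrac{x^2}{2(1-\eta)}$ for $x\le\eta$ in one shot; one small point to fill in when you write it out is the claim that the right endpoint $g(\eta)=\eta+\tfrac{\eta^2}{2(1-\eta)}+\log(1-\eta)$ is nonnegative, which follows since $g(0)=0$ and $g'(\eta)=\tfrac{\eta^2}{2(1-\eta)^2}\ge 0$.
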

The proof can be found in Appendix \ref{sec:appendix_expert_advice_proof}.

\subsection{Universal Portfolio}
Consider an investor with a fixed-time trading horizon, $T$, managing a portfolio of $N$ assets. Define the price relative for the $i$-th stock at time $t$ as $x^t_i = C^t_i / C^{t-1}_i$, where $C^t_i$ is the closing price at time $t$ for the $i$-th stock. So today's closing price of asset $i$ equals $x_i^t$ times yesterday's closing price, \textit{i.e.} today's price relative to yesterday's.

A portfolio at day $t$ can be described as $w^t \in \Delta^N$, where $w_i^t$ is the proportion of an investor's total wealth in asset $i$ at the beginning of the trading day. Then the wealth of the portfolio at the beginning of day $t+1$ is $w^t\cdot x^t$ times the wealth of the portfolio at day $t$.

Consider the average log-return of the portfolio
\begin{align*}
    \frac{1}{T}\log\left(\prod_{t=1}^T w^t \cdot x^t\right)\ =\ \frac{1}{T}\sum_{t=1}^T\log(w^t\cdot x^t).
\end{align*}
Similarly to predicting with expert advice, it is too ambitious to find a sequence of portfolio vectors $\{w^t\}_t$ that maximizes the average log-return. Instead, we wish to find such a sequence that approaches the best fixed-weight portfolio, \textit{i.e.}
\begin{align*}
    \frac{1}{T} LR_T\to 0, \quad \text{where}\quad LR_T\ =\ \sum_{t=1}^T\log(u\cdot x^t) - \sum_{t=1}^T\log(w^t\cdot x^t)
\end{align*}
as $T\to\infty$, for some $u\in\Delta^N$. If such a sequence can be found, $\{w^t\}_t$ is a universal portfolio. $LR_T$ is commonly known as the log-regret. 

Two standard assumptions are made when proving universal portfolios: (1) For every day, all assets have a bounded price relative, and at least one is non-zero, \textit{i.e.} $0 < \max_{i} x^t_i < \infty$ for all $t$, and (2) No stock goes bankrupt during the trading period, \textit{i.e.} $a :=\min_{i,t} x_i^t > 0$, where $a$ known as the market variability parameter. This is also known as the no-junk-bond assumption.

Over the years, various bounds on the log-regret have been proven under both assumptions. Some examples include \cite{up_cover}, in his seminal paper, with $\mathcal{O}(\log T)$, \cite{up_helmbold} with $\mathcal{O}(\sqrt{T \log N})$, \cite{up_newton} with $\mathcal{O}(N^{1.5}\log(NT))$, \cite{hazan_kale} with $\mathcal{O}(N\log(T + N))$, and \cite{gaivoronski_stella_1} with $\mathcal{O}(C^2\log(T))$ where $C = \sup_{x\in\Delta^N}\|\nabla_b \log(b\cdot x)\|$ and adding an extra assumption on independent price relatives. Each with varying levels of computational complexity.

\begin{remark}
    Let $x^t\in[a,b]^N$ be a bounded sequence of price relative vectors for $0\leq a\leq b < \infty$. Since the log-regret is invariant under re-scalings of $x^t$, w.l.o.g. we can look at the log-regret for the re-scaled return vectors $x^t\in[\tilde{a}, 1]^N$ for $0\leq \tilde{a} \leq 1$.
\end{remark}

\begin{theorem}\label{thm:portfolio_cauchy_simplex}
Consider a bounded sequence of price relative vectors $x^t\in [a, 1]^N$ for some positive constant $0<a\leq1$ (no-junk-bond), and $\max_i x_i^t=1$ for all $t$. Then the log-regret generated by the Cauchy-Simplex 
\begin{align*}
    w^{t+1}\ =\ w^t(1 - \eta \Pi_{w^t} \nabla f^t),\quad\text{where}\quad \nabla f^t\ =\ - \frac{x^t}{w^t\cdot x^t},
\end{align*}
is bounded by
\begin{align*}
    \sum_{t=1}^T\log(u\cdot x^t) - \sum_{t=1}^T\log(w^t \cdot x^t)\ \leq\ \frac{D(u|w^1)}{\eta} + \frac{T\eta}{2a^2(1-\eta)},
\end{align*}
for any $u\in \Delta^N$ and $0 < \eta \leq 1$.

In particular, taking $w^1=(1/N,\ldots, 1/N)$ and $\eta = \frac{a\sqrt{2 \log(N)}}{a\sqrt{2 \log(N)} + \sqrt{T}}$ gives the bound
\begin{align*}
    \sum_{t=1}^T\log(u\cdot l^t) - \sum_{t=1}^T \log(w^t\cdot l^t)\ \leq\ \frac{\sqrt{2 T \log (N)}}{a}+\log (N).
\end{align*}
\end{theorem}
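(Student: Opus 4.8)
The plan is to mirror the proof of Theorem~\ref{thm:expert_advice}, using the relative entropy $D(u|w^t)$ as a potential (Lyapunov) function and telescoping a one-step inequality. First I would recognize the iteration as online convex optimization with the per-round loss $f^t(w) = -\log(w\cdot x^t)$, which is convex and whose gradient at $w^t$ is exactly $\nabla f^t = -x^t/(w^t\cdot x^t)$, so that the log-regret $\sum_t[\log(u\cdot x^t) - \log(w^t\cdot x^t)]$ equals $\sum_t[f^t(w^t)-f^t(u)]$. Convexity then supplies the linearization $f^t(w^t)-f^t(u)\le \nabla f^t\cdot(w^t-u)$, which reduces the whole problem to controlling the linearized regret against the drop in relative entropy.

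For the one-step analysis I would compute, exactly as in the expert-advice case,
\begin{equation}
D(u|w^t) - D(u|w^{t+1}) = \sum_i u_i\log\frac{w_i^{t+1}}{w_i^t} = \sum_i u_i\log(1 - \eta b_i^t),
\end{equation}
where $b_i^t = (\Pi_{w^t}\nabla f^t)_i = 1 - x_i^t/(w^t\cdot x^t)$, using $w^t\cdot\nabla f^t = -1$. Note $1-\eta b_i^t = (1-\eta) + \eta\, x_i^t/(w^t\cdot x^t) > 0$, so the logarithms are well defined and both positivity and the unit-sum are preserved. The linear term $-\eta\sum_i u_i b_i^t$ equals $\eta\,\nabla f^t\cdot(w^t-u)$, so the target becomes the per-step bound $D(u|w^t)-D(u|w^{t+1}) \ge \eta[f^t(w^t)-f^t(u)] - \frac{\eta^2}{2a^2(1-\eta)}$; telescoping over $t=1,\dots,T$ and using $D(u|w^{T+1})\ge0$ with $D(u|w^1)\le\log N$ yields the first bound, and substituting the stated $\eta=\tfrac{a\sqrt{2\log N}}{a\sqrt{2\log N}+\sqrt{T}}$ produces the explicit $\tfrac{\sqrt{2T\log N}}{a}+\log N$ after an AM--GM-type simplification.

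The main obstacle, and the only place that genuinely differs from Theorem~\ref{thm:expert_advice}, is the scalar estimate on each $\log(1-\eta b_i^t)$. The assumptions $x_i^t\in[a,1]$ and $\max_i x_i^t=1$ force $a\le w^t\cdot x^t\le 1$, hence $x_i^t/(w^t\cdot x^t)\in[a,1/a]$ and $b_i^t\in[1-1/a,\,1-a]$. Thus $\eta b_i^t$ can be very negative when $a$ is small, so the naive power-series bound, valid only for $|\eta b_i^t|<1$, is unavailable. The fix I would use is the lemma: for $\eta\in[0,1)$ and any $x\le\eta$, $\log(1-x)\ge -x - \frac{x^2}{2(1-\eta)}$. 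The positive range $0\le x\le\eta$ follows from the geometric-series bound since $\tfrac{1}{1-x}\le\tfrac{1}{1-\eta}$, while the entire negative range $x\le0$ follows by noting that $g(x)=\log(1-x)+x+\frac{x^2}{2(1-\eta)}$ satisfies $g(0)=g'(0)=0$ and $g''(x)=\frac{1}{1-\eta}-\frac{1}{(1-x)^2}\ge0$ for $x\le0$, so $g$ is convex there and stays nonnegative. Since $b_i^t\le 1-a\le1$ forces $x=\eta b_i^t\le\eta$, the lemma applies to every coordinate no matter how negative $b_i^t$ is.

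With the lemma in hand, summing against $u_i$ gives
\begin{equation}
\sum_i u_i\log(1-\eta b_i^t) \ge -\eta\sum_i u_i b_i^t - \frac{\eta^2}{2(1-\eta)}\sum_i u_i (b_i^t)^2,
\end{equation}
and the last ingredient is the variance bound $\sum_i u_i (b_i^t)^2 \le 1/a^2$, which is precisely where the factor $1/a^2$ and the no-junk-bond assumption enter: $|b_i^t| = |1 - x_i^t/(w^t\cdot x^t)| \le 1/a - 1 < 1/a$. Combining this with the convexity inequality closes the per-step bound, after which the telescoping and the $\eta$-optimization described above finish the proof. I expect the scalar lemma — specifically its validity for large negative arguments via the convexity argument rather than a Taylor expansion — to be the step requiring the most care, with everything else being a faithful transcription of the expert-advice argument.
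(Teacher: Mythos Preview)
Your proposal is correct and follows essentially the same route as the paper: a one-step relative-entropy inequality for the Cauchy--Simplex update, the observation $(\Pi_{w^t}\nabla f^t)_i = 1 - x_i^t/(w^t\cdot x^t)$, the linearization $\log(u\cdot x^t/w^t\cdot x^t)\le u\cdot x^t/(w^t\cdot x^t)-1$ (which is exactly the paper's use of $e^x-1\ge x$, restated as convexity of $-\log$), the bound $|b_i^t|\le 1/a$, telescoping, and the same choice of $\eta$.

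The only packaging difference is the scalar step. The paper invokes its general lemma (Theorem~\ref{thm:appendix_relative_entropy_diff}), which gives $\log\bigl(e^{-x}/(1-x)\bigr)\le C_{\gamma}x^2$ for $x<\gamma$, and then at the end uses $C_\eta\eta\le \eta/(2(1-\eta))$ to reach the stated bound. You instead prove the sharper-looking inequality $\log(1-x)\ge -x - x^2/(2(1-\eta))$ for all $x\le\eta$ in one shot, handling the potentially large negative range of $\eta b_i^t$ explicitly via the convexity argument on $g(x)=\log(1-x)+x+x^2/(2(1-\eta))$. This is a cleaner and more self-contained treatment of the point you correctly flag as the only genuine difference from the expert-advice proof; the paper's $C_\gamma$ route gives the same constant after simplification but leaves the negative-$x$ case implicit. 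Either way the remainder of the argument is identical.
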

The proof can be found in Appendix \ref{sec:appendix_universal_portfolio_proof}.

\textbf{Experimental Details}: We look at the performance of our algorithm on four standard datasets used to study the performance of universal portfolios: (1) {NYSE} is a collection of 36 stocks traded on the New York Stock Exchange from July 3, 1962, to Dec 31, 1984, (2) {DJIA} is a collection of 30 stocks tracked by the Dow Jones Industrial Average from Jan 14, 2009, to Jan 14, 2003, (3) {SP500} is a collection of the 25 largest market cap stocks tracked by the Standard \& Poor's 500 Index from Jan 2, 1988, to Jan 31, 2003, and (4) TSE is a collection of 88 stocks traded on the Toronto Stock Exchange from Jan 4, 1994, to Dec 31, 1998.\footnote{The datasets were original found on \url{http://www.cs.technion.ac.il/~rani/portfolios/}, but is now unavailable. It was retrieved using the WayBack Machine \url{https://web.archive.org/web/20220111131743/http://www.cs.technion.ac.il/~rani/portfolios/}.}

Two other portfolio strategies are considered: (1) Helmbold \textit{et. al.} \cite{up_helmbold} (EGD), who uses the EGD scheme $w^{t+1} = w^t\exp(\eta \nabla f^t) / \sum_i w^t_i\exp(\eta \nabla_i f^t)$, with $\nabla f^t = x^t/w^t\cdot x^t$, and (2) Buy and Hold (B\&H) strategy, where one starts with an equally weighted portfolio and the portfolio is left to its own devices.

Two metrics are used to evaluate the performance of the portfolio strategies: (1) The Annualized Percentage Yield: $\text{APY} = \text{R}^{1/y}-1$, where $R$ is the total return over the full trading period, and $y=T/252$, where 252 is the average number of annual trading days, and (2) The Sharpe Ratio: $\text{SR} = (\text{APY} - R_f)/\sigma$, where $\sigma^2$ is the variance of the daily returns of the portfolio, and $R_f$ is the risk-free interest rate. Intuitively, the Sharpe ratio measures the performance of the portfolio relative to a risk-free investment while also factoring in the portfolio's volatility. Following \cite{pamr}, we take $R_f=4\%$.

We take the learning rate as the one used to prove that CS and EGD are universal portfolios. In particular, $\eta^{\text{CS}} = \frac{a\sqrt{2\log N}}{a\sqrt{2\log N} + \sqrt{T}}$ and $\eta^{\text{EGD}} = 2a\sqrt{2\log(N) / T}$, respectively, where $a$ is the market variability parameter. We assume that the market variability parameter is given for each dataset.

Experiments were written in Python and can be found on GitHub\footnote{\url{https://github.com/infamoussoap/UniversalPortfolio}}.

\textbf{Results}: A table with the results can be seen in Table \ref{tab:universal_portfolio}. For the NYSE, DJIA, and SP500 datasets, CS slightly outperforms EGD in both the APY and Sharpe ratio, with EGD having a slight edge on the APY for the NYSE dataset. But curiously, the B\&H strategy outperforms both CS and EGD on the TSE.

We remark that this experiment does not reflect real-world performance, as the market variability parameter is assumed to be known, transaction costs are not factored into our analysis, and the no-junk-bond assumption tends to overestimate performance \citep{survivorship_bias_1, survivorship_bias_2, survivorship_bias_3}. However, this is outside of the scope of this paper. It is only shown as a proof of concept.

\begin{table}[]
\caption{Performance of different portfolio strategies on different datasets. Bold represents the maximum (best) value, and underline represents the minimum (worst) value.}
\label{tab:universal_portfolio}
\centering
\begin{tabular}{l|cc|cc|cc}
\toprule
 & \multicolumn{2}{c|}{CS}&\multicolumn{2}{c|}{EGD} &\multicolumn{2}{c}{B\&H} \\
\midrule
Dataset & APY & Sharpe & APY & Sharpe & APY & Sharpe\\
\midrule
NYSE & 0.162 & \textbf{14.360} & \textbf{0.162} & 14.310& \underline{0.129} & \underline{9.529} \\
DJIA & \textbf{-0.099} & \textbf{-8.714} & -0.101 & -8.848& \underline{-0.126} & \underline{-10.812} \\
SP500 & \textbf{0.104} & \textbf{4.595} & 0.101 & 4.395& \underline{0.061} & \underline{1.347} \\
TSE & 0.124 & 10.225& \underline{0.123} & \underline{10.204} & \textbf{0.127} & \textbf{10.629} \\
\bottomrule
\end{tabular}
\end{table}

\section{Conclusion}
This paper presents a new iterative algorithm, the Cauchy-Simplex, to solve convex problems over a probability simplex. Within this algorithm, we find ideas from previous works which only capture a portion of the simplex constraint. Combining these ideas, the Cauchy-Simplex provides a numerically efficient framework with nice theoretical properties.

The Cauchy-Simplex maintains the linear form of Projected Gradient Descent, allowing one to find analytical solutions to a line search for certain convex problems. But unlike projected gradient descent, this analytical solution will remain in the probability simplex. A backtracking line search can be used when an analytical solution cannot be found. However, this requires an extra parameter, the maximum candidate step size. The Cauchy-Simplex provides a natural answer as a maximum learning rate is required to enforce positivity, rather than the exponentials used in Exponentiated Gradient Descent. 

Since the Cauchy-Simplex satisfies both constraints of the probability simplex in its iteration scheme, its gradient flow can be used to prove convergence for differentiable and convex functions. This implies the convergence of its discrete linear scheme. This is in contrast to EGD, PFW, and PGD, in which its discrete nature is crucial in satisfying both constraints of the probability simplex. More surprisingly, we find that in the proofs, formulas natural to probability, \textit{i.e.}, variance, and relative entropy, are necessary when proving convergence.

We believe that the strong numerical results and simplicity seen through its motivating derivation, gradient flow, and iteration scheme make it a strong choice for solving problems with a probability simplex constraint.

\acks{We thank Prof. Johannes Ruf for the helpful discussion and his suggestion for potential applications in the multi-armed bandit problem, which ultimately helped the proof for universal portfolios. We also thank the anonymous reviewers for their helpful suggestions about improving the presentation and readability of this paper. The authors declare no competing interests.}


\newpage


\appendix
\section{Gradient Flow for Orthogonal Matrix Constraint}\label{sec:cayley_transform_derivation}
Consider the optimization problem
\begin{align*}
    \min_{Q\in \mathcal{S}^n} f(Q),\quad\text{where}\quad \mathcal{S}^n=\{Q\in\mathbb{R}^{n\times n}\, |\, QQ^T=I\}.
\end{align*}
Orthogonal matrices with $\det(Q)=1$ can be parameterized using the Cayley Transform $Q = 2(I - X)^{-1} - I$, where  $X\in\mathbb{R}^{n\times n}$ is an anti-symmetric matrix, and let $F(X)= f(Q)$. The anti-symmetric matrix $X$ can be parameterized as
\begin{align*}
    X\ =\ \sum_{j > k} X_{jk}\, E^{jk},
\end{align*}
for $X_{jk}\in\mathbb{R}$ and $E^{jk}_{pq}=\delta_{pj}\delta_{qk}-\delta_{pk}\delta_{qj}$.

Consider the gradient flow
\begin{align*}
    \frac{dQ_{pq}}{dt}\ &=\ -\sum_{jk}\frac{\partial Q_{pq}}{\partial X_{jk}}\frac{d X_{jk}}{dt}\\
    &=\  -\sum_{jk}\frac{\partial Q_{pq}}{\partial X_{jk}}\sum_{ab} \frac{\partial f}{\partial Q_{ab}}\, \frac{\partial Q_{ab}}{\partial X_{jk}}.
\end{align*}
Under the Cayley Transform,
\begin{align*}
    \frac{\partial Q}{\partial X_{jk}}\ &=\ 2(I-X)^{-1} \frac{\partial X}{\partial X_{jk}}(I-X)^{-1}\\
    &=\ 2\Omega\, E^{jk}\, \Omega.
\end{align*}
Thus
\begin{align*}
    \frac{\partial Q_{ab}}{\partial X_{jk}}\ &=\ 2\sum_{lm}\Omega_{al}\,E^{jk}_{lm}\,\Omega_{mb}\\
    &=\ 2(\Omega_{aj}\Omega_{kb}-\Omega_{ak}\Omega_{jb})
\end{align*}

The gradient flow for the latent variables $X_{jk}$ is therefore
\begin{align*}
    \frac{dX_{jk}}{dt}\ &=\ 2\sum_{ab} \frac{\partial f}{\partial Q_{ab}}(\Omega_{aj}\Omega_{kb}-\Omega_{ak}\Omega_{jb})\\
    &=\ 2\left(\Omega^T \frac{\partial f}{\partial Q_{ab}} \Omega^T- \Omega \left(\frac{\partial f}{\partial Q_{ab}}\right)^T \Omega\right)\\
    &=\ 2(\Lambda\, \Omega^T - \Omega \,\Lambda^T)_{jk},
\end{align*}
where $\Lambda = \Omega^T \frac{\partial f}{\partial Q}$. Converting this to the gradient flow for the orthogonal matrix $Q$ yields 
\begin{align*}
    \frac{dQ_{pq}}{dt}\ &=\ -2\sum_{jk}(\Omega_{pj}\Omega_{kq}-\Omega_{pk}\Omega_{jq})(\Lambda \Omega^T - \Omega \Lambda^T)_{jk}\\
    &=\ -2[\Omega(\Lambda \Omega^T - \Omega \Lambda^T)\Omega - \Omega(\Lambda \Omega^T - \Omega \Lambda^T)^T\Omega]\\
    &=\ -4\Omega(\Lambda \Omega^T - \Omega \Lambda^T)\Omega,
\end{align*}
as required.

\section{Convergence Proofs}
We will use the notation $\Pi^t=\Pi_{w_t}$ and $\nabla f^t = \nabla_w f(w^t)$ for the remaining section.
\subsection{Decreasing Relative Entropy}
\begin{theorem}\label{thm:appendix_relative_entropy_diff}
    For any $u\in\Delta^N$, each iteration of the linear Cauchy-Simplex (\ref{eq:discrete_time_scheme}) satisfies the bound
    \begin{align*}
        D(u | w^{t+1}) - D(u|w^t)\ \leq\ \eta^t u\cdot(\Pi^t\nabla f^t) + C_{\gamma_t}(\eta^t)^2 u\cdot(\Pi^t\nabla f^t)^2,
    \end{align*}
    where $C_{\gamma_t} = \gamma_t^{-2}\log(e^{-\gamma_t}/(1 - \gamma_t))$ and $\eta^t = \gamma_t \eta^{t,\max}$ with $\gamma_t\in(0,1)$.
\end{theorem}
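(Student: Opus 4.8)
The plan is to reduce the statement to a one-dimensional inequality. First I would use the explicit multiplicative form of the update. Writing $g^t = \Pi^t \nabla f^t$, the linear scheme (\ref{eq:discrete_time_scheme}) reads componentwise as $w_i^{t+1} = w_i^t(1 - \eta_t g_i^t)$. Substituting into the definition of relative entropy, the $\log(u_i)$ terms cancel and the $\log w_i^t$ terms combine, leaving
\[
D(u|w^{t+1}) - D(u|w^t) = -\sum_i u_i \log\!\big(1 - \eta_t g_i^t\big).
\]
Because $\eta_t \le \eta_{t,\max}$ and $\eta_{t,\max}^{-1} = \max_i g_i^t$, every argument satisfies $\eta_t g_i^t \le \gamma_t < 1$, so each logarithm is well defined and the update keeps $w^{t+1}$ positive on the support of $w^t$.

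Next I would prove the scalar bound $-\log(1-x) \le x + C_{\gamma_t} x^2$ for every $x \le \gamma_t$, since multiplying it by $u_i \ge 0$ (with $x = \eta_t g_i^t$) and summing yields exactly the claim, using $\sum_i u_i g_i^t = u\cdot(\Pi^t\nabla f^t)$ and $\sum_i u_i (g_i^t)^2 = u\cdot(\Pi^t\nabla f^t)^2$. The constant is engineered for this: a direct check gives equality at $x = \gamma_t$, i.e. $\gamma_t + C_{\gamma_t}\gamma_t^2 = -\log(1-\gamma_t)$, which is precisely how $C_{\gamma_t} = \gamma_t^{-2}\log(e^{-\gamma_t}/(1-\gamma_t))$ is read off.

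The crux, and the main obstacle, is establishing this scalar inequality over the whole half-line $x \in (-\infty, \gamma_t]$ rather than merely near $0$. I would set $h(x) = x + C_{\gamma_t}x^2 + \log(1-x)$ and show $h \ge 0$ on $(-\infty, \gamma_t]$ by a convexity/monotonicity analysis. One computes $h(0)=h(\gamma_t)=0$ and $h'(0)=0$, while $h''(x) = 2C_{\gamma_t} - (1-x)^{-2}$ is strictly decreasing in $x$ and changes sign exactly once. A short expansion of $C_{\gamma_t}$ shows $C_{\gamma_t} > 1/2$ for $\gamma_t\in(0,1)$, so $h''(0) > 0$. On $(-\infty,0)$ this forces $h'$ strictly increasing with $h'(0)=0$, hence $h' < 0$ and $h$ strictly decreasing, so $h > 0$ there. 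On $(0,1)$ the sign pattern of $h''$ makes $h'$ rise from $0$ to a positive maximum and then fall to $-\infty$, so $h$ has a single interior maximum; combined with the two known zeros at $0$ and $\gamma_t$, this pins $h \ge 0$ throughout $[0,\gamma_t]$.

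Finally I would assemble the pieces: apply the scalar bound pointwise at $x = \eta_t g_i^t \le \gamma_t$, weight by $u_i \ge 0$, and sum, converting the middle display into the stated entropy difference bound. The only delicate bookkeeping is checking that indices with $u_i = 0$ drop out harmlessly and that $w^{t+1}$ stays strictly positive on the support, so that no term of $D(u|w^{t+1})$ is infinite; both follow from $\eta_t g_i^t \le \gamma_t < 1$.
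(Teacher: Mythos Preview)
Your proposal is correct and follows essentially the same route as the paper: both compute $D(u|w^{t+1})-D(u|w^t)=-\sum_i u_i\log(1-\eta_t g_i^t)$ and then apply the scalar bound $-\log(1-x)\le x+C_{\gamma_t}x^2$ for $x\le\gamma_t$ termwise. The only difference is that the paper simply asserts this scalar inequality, whereas you supply a full proof via the sign analysis of $h(x)=x+C_{\gamma_t}x^2+\log(1-x)$; your argument there is sound.
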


\begin{proof}
    By the CS update scheme (\ref{eq:discrete_time_scheme})
    \begin{align*}
        D(u|w^{t+1}) - D(u|w^{t})\ &=\ \sum_i u_i\log\left(w^{t+1}_i/w^t_i\right)\\
        &=\ \sum_iu_i\log\left(\frac{1}{1-\eta^t\Pi^t\grad_i f^t}\right)\\
        &=\ \sum_iu_i\log\left(\frac{e^{-\eta^t\Pi^t\grad_i f^t}}{1-\eta^t\Pi^t\grad_i f^t}e^{\eta^t\Pi^t\grad_i f^t}\right)\\
        &=\ \eta^t u\cdot(\Pi^t\nabla f^t) + \sum_i u_i\log\left(\frac{e^{-\eta^t\Pi^t\grad_i f^t}}{1-\eta^t\Pi^t\grad_i f^t}\right).
    \end{align*}

    Since the learning rate can be written as $\eta^t = \gamma_t\eta^{t,\max}$, with $\gamma_t\in(0,1)$,
    \begin{align*}
        \eta^t \Pi^t\nabla_i f^t\ \leq\ (\gamma_t \eta^{t,\max})\max_i \Pi^t \nabla_i f^t\ =\ \gamma_t\ <\ 1.
    \end{align*}

    {It can be shown that $x^{-2}\log(e^{-x}/(1-x)) > 0$ for $x \in (0, 1)$ and is an increasing function on $(0, 1)$ with $x^{-2}\log(e^{-x}/(1-x))\to\infty$ as $x\to 1$. Thus, in the interval $x \in (0, \gamma_t]$ with $\gamma_t < 1$, this can be upper-bounded by
    \begin{align*}
        0\ <\ x^{-2}\log[e^{-x}/(1-x)]\ \leq\ \gamma_t^{-2}\log[e^{-\gamma_t}/(1-\gamma_t)]\ =\ C_{\gamma_t},\qquad\text{for}\qquad x\in (0,\gamma_t].
    \end{align*}
    This yields the inequality $0< \log(e^{-x}/(1-x))\leq C_{\gamma_t}\, x^2$ for $0 < x \leq \gamma_t$.}
    
    Since $C_{\gamma_t} > 0$ for $\gamma_t \in (0, 1)$ and $C_{\gamma_t} \to \infty$ as $\gamma_t \to 1$, we have the bound
    \begin{align*}
        0\ \leq\ \log\left(\frac{e^{-\eta^t\Pi^t\grad_i f^t}}{1-\eta^t\Pi^t\grad_i f^t}\right)\ \leq\ C_{\gamma_t}(\eta^t)^2(\Pi^t \nabla_i f^t)^2. 
    \end{align*}
    Giving the required inequality
    \begin{align*}
        D(u|w^{t+1}) - D(u|w^{t})\ \leq\ \eta^t u\cdot(\Pi^t\nabla f^t) + C_{\gamma_t}(\eta^t)^2 u\cdot(\Pi^t \nabla f^t)^2.
    \end{align*}
\end{proof}

\subsection{Progress Bound}
\begin{theorem}\label{thm:discrete_decreasing}
Let $f$ be convex, differentiable, and $\nabla f$ is $L$-Lipschitz continuous. Then each iteration of the linear Cauchy-Simplex (\ref{eq:discrete_time_scheme}) guarantees
\begin{equation*}
f(w^{t+1})\ \leq\ f(w^t) - \frac{\eta^t}{2}\var [\nabla f^t \, | \,  w^t],\quad\text{for}\quad 0 < \eta^t < \min\bigg{\{}\frac{1}{L}, \eta^{t,\max}\bigg{\}},
\end{equation*}
where $\eta^{t,\max}$ is defined in (\ref{eq:discrete_time_scheme}).
\end{theorem}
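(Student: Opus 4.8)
The plan is to use the fact that the step $w^{t+1}-w^t = -\eta_t d^t$ is small and that $\nabla f$ is $L$-Lipschitz, so the standard descent (quadratic upper bound) lemma applies. First I would invoke the smoothness inequality
\begin{equation}
    f(w^{t+1}) \leq f(w^t) + \nabla f^t\cdot(w^{t+1}-w^t) + \frac{L}{2}\|w^{t+1}-w^t\|^2 .
\end{equation}
The constraint $\eta_t\leq\eta_{t,\max}$ is exactly what keeps $w^{t+1}\in\Delta^n$, so the segment joining $w^t$ and $w^{t+1}$ lies in the domain and the bound is legitimate.

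Next I would identify the two remaining terms. For the linear term, writing $d^t=w^t(\grad f^t-w^t\cdot\grad f^t)$ componentwise as $d^t_i = w^t_i(\Pi^t\grad f^t)_i$, the same algebraic manipulation already carried out in the proof of Theorem \ref{thm:decreasing} gives $\grad f^t\cdot d^t = \var[\grad f^t\,|\,w^t]$, hence $\grad f^t\cdot(w^{t+1}-w^t) = -\eta_t\,\var[\grad f^t\,|\,w^t]$. For the quadratic term, I would use that $w^t\in\Delta^n$ forces $0\leq w^t_i\leq1$, so $(w^t_i)^2\leq w^t_i$, which yields
\begin{equation}
    \|w^{t+1}-w^t\|^2 = \eta_t^2\sum_i (w^t_i)^2(\Pi^t\grad f^t)_i^2 \leq \eta_t^2\sum_i w^t_i(\Pi^t\grad f^t)_i^2 = \eta_t^2\,\var[\grad f^t\,|\,w^t].
\end{equation}

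Substituting both into the descent lemma collapses everything onto a single multiple of the variance:
\begin{equation}
    f(w^{t+1}) \leq f(w^t) - \eta_t\Big(1 - \tfrac{L\eta_t}{2}\Big)\var[\grad f^t\,|\,w^t].
\end{equation}
Finally, the hypothesis $\eta_t\leq 1/L$ gives $1-\tfrac{L\eta_t}{2}\geq\tfrac12$, so the bracket is at least $1/2$ and the claimed inequality $f(w^{t+1})\leq f(w^t)-\tfrac{\eta_t}{2}\var[\grad f^t\,|\,w^t]$ follows. I expect no genuinely hard step here: the only point requiring care is the quadratic-term estimate, where one must remember to exploit the simplex bound $(w^t_i)^2\leq w^t_i$ to convert $\|d^t\|^2$ into the variance $w^t\cdot(\Pi^t\grad f^t)^2$ rather than a cruder quantity; everything else is the textbook smooth-descent argument combined with the variance identity from Theorem \ref{thm:decreasing}.
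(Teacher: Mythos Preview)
Your proposal is correct and mirrors the paper's proof essentially step for step: the paper applies the same Lipschitz descent inequality, computes $\nabla f^t\cdot(w^{t+1}-w^t)=-\eta_t\var[\nabla f^t|w^t]$, bounds $\|w^{t+1}-w^t\|^2\leq\eta_t^2\var[\nabla f^t|w^t]$ via $(w_i^t)^2\leq w_i^t$, and then uses $\eta_t\leq1/L$ to absorb the quadratic term. Your final step (observing $1-\tfrac{L\eta_t}{2}\geq\tfrac12$) is in fact slightly more explicit than the paper's phrasing of ``minimizing the right side with respect to $\eta_t$,'' but the content is identical.
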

\begin{proof}
    Since $f$ is convex with $\nabla f$ Lipschitz continuous, 
    {the descent lemma yields \citep{Bertsekas_2016}}
    \begin{equation}\label{eq:theorem_14_descent_lemma}
        f(w^{t+1})\ \leq\ f(w^t) + \nabla f^t \cdot(w^{t+1} - w^t) + \frac{L}{2}\|w^{t+1} - w^t\|^2.
    \end{equation}
    Our iteration scheme gives that
    \begin{equation*}
        \|w^{t + 1} - w^t\|^2\ =\ (\eta^t)^2\sum_i\Big(w^t_i \, \Pi^t \grad_i f^t\Big)^2\ \leq\ (\eta^t)^2\sum_i w_i^t(\Pi^t \grad_i f^t)^2,
    \end{equation*}
    since $0\leq w_i^t\leq 1$. Hence
    \begin{equation*}
        f(w^{t+1})\ \leq\ f(w^t) - \frac{\Var[\nabla f^t|w_t]}{2L} \left(  2 z - z^2 \right),
    \end{equation*}
    where $z=\eta^{t} L$. However, $- ( 2z - z^{2} ) \le -z$ for $0 \le z \le 1$. Therefore,
    \begin{equation*}
        f(w^{t+1})\ \leq\ f(w^t) - \frac{\eta^t}{2} \var [\nabla f^t \, | \,  \, w^t],\quad\text{for}\quad 0< \eta^t \leq \min\bigg{\{}\frac{1}{L}, \eta^{t,\max}\bigg{\}}.
    \end{equation*}
\end{proof}

\subsection{Proof of Lemma \ref{lemma:asymptotic_convergence}}\label{sec:asymptotic_convergence}
\begin{proof}
    Notice that for $\eta^t\in (0, \eta^{t,\max}-\varepsilon]$, the bound (\ref{eq:theorem_14_descent_lemma}) still holds, and that a line search minimizes the left-hand side of (\ref{eq:theorem_14_descent_lemma}). Bounding the right-hand side yields
    \begin{align*}
        f(w^{t+1})\ \leq\ f(w^t) - \frac{\eta^t}{2}\var[\nabla f^t\,|\,w^t],\quad\text{for}\qquad \eta^t = \min\bigg{\{}\frac{1}{L}, \eta^{t,\max} - \varepsilon\bigg{\}}.
    \end{align*}
    
    As shown in Theorem \ref{thm:decreasing}, $\var[\nabla f^t|w^t]=0$ only when $w^t=w^*$. Thus w.l.o.g., for finite $T>0$, assume the sequence of vectors $\{w_t\}_{t=0}^T$ generated by the line search satisfies $\Pi^t\nabla f^t\neq 0$, \textit{i.e.}, the optimality condition has not been reached. Then $\{f(w^t)\}_{t=0}^{T}$ is a strictly decreasing sequence. Since $f$ is convex and $\Delta$ is compact, it is bounded from below on $\Delta$. Hence $f(w^T)\to f(w^*)$ as $T\to\infty$.
\end{proof}

\subsection{Proof of Theorem \ref{thm:convergence_of_linear_scheme}}\label{sec:appendix_discrete_proof}
\begin{proof}
    W.l.o.g, we assume that the sequence $\{\eta^t\}_{t=0}^T$, which satisfies the assumptions of Theorem \ref{thm:convergence_of_linear_scheme}, generates the sequence $\{w^t\}_{t=0}^T$ such that $\Pi^t\nabla f^t\neq 0$, \textit{i.e.}, the optimality condition has not been reached.
    
    By Theorem \ref{thm:appendix_relative_entropy_diff},
    \begin{align*}
        D(w^*|w^{t+1}) - D(w^*|w^{t}) \ \leq\ \eta^t \nabla f^t\cdot (w^* - w^t) + C_{\gamma_t}(\eta^t)^2 w^*\cdot(\Pi^t \nabla f^t)^2.
    \end{align*}
    By convexity of $f$, rearranging gives
    \begin{align}\label{eq:discrete_f_bound_2}
        \eta^t \big(f(w^t) - f(w^*) \big)\ \leq\ D(w^*|w^t) - D(w^*|w^{t+1}) + C_{\gamma_t} (\eta^t)^2 w^*\cdot (\Pi^t \nabla f^t)^2.
    \end{align}

    By Theorem \ref{thm:discrete_decreasing}, we have that
    \begin{align*}
        f(w^t)\ \geq\ f(w^{t+1}) + \frac{\eta^t}{2}w^t\cdot (\Pi^t \nabla f^t)^2.
    \end{align*}
    Repeatedly applying this inequality gives
    \begin{align*}
        f(w^T) + \frac{1}{2}\sum_{k=t}^{T-1}\eta^k w^k\cdot (\Pi^k\nabla f^k)^2\ \leq\ f(w^t),
    \end{align*}
    for $t \leq T - 1$. Thus, (\ref{eq:discrete_f_bound_2}) gives the bound
    \begin{align*}
        \eta^t(f(w^T) - f(w^*))\ \leq\ D(w^*|w^t) &- D(w^* | w^{t+1}) + C_{\gamma_t}(\eta^t)^2 w^*\cdot(\Pi^t\nabla f^t)^2\\
        & - \frac{1}{2}\eta^t \sum_{k=t}^{T-1}\eta^k w^k\cdot(\Pi^k\nabla f^k)^2.\notag
    \end{align*}

    Summing over time and collapsing the sum gives
    \begin{align}\label{eq:discrete_df_sum_eta}
        (f(w^T) - f(w^*))\sum_{t=0}^{T-1}\eta^t\ \leq\  D(w^*|w^0) &- D(w^*|w^T) \\
        & +\sum_{t=0}^{T-1}C_{\gamma_t}(\eta^t)^2 w^*\cdot(\Pi^t\nabla f^t)^2 \notag \\
        & -\frac{1}{2} \sum_{t=0}^{T-1}\sum_{k=t}^{T-1}\eta^k\eta^t w^k\cdot(\Pi^k\nabla f^k)^2.\notag
    \end{align}
    
    We can rewrite the last term as
    \begin{align*}
        \sum_{t=0}^{T-1}\sum_{k=t}^{T-1}\eta^k\eta^t w^k\cdot(\Pi^k\nabla f^k)^2\ =\ \sum_{t=0}^{T-1}\sum_{k=0}^{t}\eta^k\eta^t w^t\cdot(\Pi^t\nabla f^t)^2.
    \end{align*}
    Thus, the last two terms of (\ref{eq:discrete_df_sum_eta}) can be bounded by
    \begin{align*}
        S\ &:=\ \sum_{t=0}^{T-1}\eta^t \left(C_{\gamma_t}\eta^t w^*\cdot(\Pi^t\nabla f^t)^2 - \frac{w^t\cdot(\Pi^t\nabla f^t)^2}{2} \sum_{k=0}^t\eta^k\right)\\
        &\leq\ \sum_{t=0}^{T-1}\eta^t\left(C_{\gamma_t}\eta^t \max_i\, (\Pi^t \nabla f^t)^2_i - \frac{w^t\cdot(\Pi^t\nabla f^t)^2}{2} \sum_{k=0}^t\eta^k\right),
    \end{align*}
    as $w^*\in\Delta^n$. By assumption on $C_{\gamma_t}$,
    \begin{align*}
        C_{\gamma_t}\eta^t \max_i\, (\Pi^t \nabla f^t)^2_i - \frac{w^t\cdot(\Pi^t\nabla f^t)^2}{2} \sum_{k=0}^t\eta^k\ \leq\ \frac{w^t\cdot(\Pi^t\nabla f^t)^2}{2}\left(\eta^t-\sum_{k=0}^t\eta^k\right)\ \leq\ 0.
    \end{align*}
    It follows that $S\leq 0$. Thus (\ref{eq:discrete_df_sum_eta}) gives
    \begin{align*}
        f(w^T) - f(w^*)\ \leq\ \frac{D(w^*|w^0) - D(w^*|w^T)}{\sum_{t=0}^{T-1}\eta^t}. 
    \end{align*}
    Taking $w^0 = (1/n, \ldots, 1/n)$, then $D(u|w^0)\leq \log(n)$ for all $u\in\Delta^n$. Since relative entropy is non-negative,
    \begin{align*}
        f(w^T) - f(w^*)\ \leq\ \frac{\log(n)}{\sum_{t=0}^{T-1}\eta^t}.
    \end{align*}
\end{proof}

\subsection{Proof of Theorem \ref{thm:expert_advice}}\label{sec:appendix_expert_advice_proof}
\begin{proof}
Rearranging Theorem \ref{thm:appendix_relative_entropy_diff} gives
\begin{align}\label{eq:regret_bound_1}
    -\eta^t u \cdot(\Pi^t \nabla f^t)\ \leq\ D(u|w^t) - D(u|w^{t+1}) + C_{\gamma_t}(\eta^t)^2 u\cdot(\Pi^t \nabla f^t)^2.
\end{align}
Since $\nabla f^t = l^t$, we have the inequality
\begin{align*}
    -1\ \leq\ \Pi^t\nabla_i f^t\ =\ l^t_i - w^t\cdot l^t\ \leq\ 1,
\end{align*}
as $l^t_i\in[0,1]$. Thus dividing (\ref{eq:regret_bound_1}) by $\eta^t$ gives
\begin{align*}
    w^t\cdot l^t - u\cdot l^t\ \leq\ \frac{D(u|w^t) - D(u|w^{t+1})}{\eta^t} + C_{\gamma_t}\eta^t,
\end{align*}
where $\eta^t = \gamma_t \eta^{t,\max}$, for some $\gamma_t \in (0,1)$.

Since the maximum learning rate has the lower bound
\begin{align*}
    \eta^{t,\max}\ =\ \frac{1}{\max_i l_i^t - w^t\cdot l^t} \ \geq\ \frac{1}{\max_i l^t_i}\ \geq\ 1,
\end{align*}
we can take a fixed learning rate $\eta^t = \eta \in (0, 1)$. Moreover, $\gamma_t = \eta / \eta^{t,\max}\leq \eta$. Since $C_{\gamma_t}$ is an increasing function of $\gamma_t$, $C_{\gamma_t} \leq C_{\eta}$, thus giving the bound
\begin{align*}
     w^t\cdot l^t - u\cdot l^t\ \leq\ \frac{D(u|w^t) - D(u|w^{t+1})}{\eta} + C_\eta \eta.
\end{align*}

Summing over time and collapsing the sum gives the bound
\begin{align*}
    \sum_{t=1}^T w^t\cdot l^t - \sum_{t=1}^T u\cdot l^t\ &\leq\ \frac{D(u|w^1) - D(u|w^{T+1})}{\eta} + TC_\eta \eta\\
    &\leq\ \frac{D(u|w^1)}{\eta} + TC_\eta \eta\\
    &=\ \frac{D(u|w^1)}{\eta} + \frac{T\log(e^{-\eta}/(1-\eta))}{\eta },
\end{align*}
by definition of $C_\eta$. Using the inequality $\log(e^{-x}/(1-x))/x \leq x/(2(1-x))$ for $0\leq x\leq 1$,
\begin{align*}
    \sum_{t=1}^T w^t\cdot l^t - \sum_{t=1}^T u\cdot l^t\ \leq\ \frac{D(u|w^1)}{\eta} + \frac{T\eta}{2(1-\eta)}.
\end{align*}

Let $w^1=(1/N, \ldots, 1/N)$, then $D(u|w^1)\leq \log(N)$ for all $u\in\Delta^N$. Thus giving the desired bound
\begin{align*}
    \sum_{t=1}^T w^t\cdot l^t - \sum_{t=1}^T u\cdot l^t\ \leq\ \frac{\log(N)}{\eta} + \frac{T\eta}{2(1-\eta)}.
\end{align*}
The right side of this inequality is minimized when $\eta = \frac{\sqrt{2\log (N)}}{\sqrt{2\log (N)}+\sqrt{T}} < 1$. Upon substitution gives the bound
\begin{align*}
    \sum_{t=1}^T w^t\cdot l^t - \sum_{t=1}^T u\cdot l^t\ \leq\ \sqrt{2 T \log (N)} + \log (N).
\end{align*}
\end{proof}

\subsection{Proof of Theorem \ref{thm:portfolio_cauchy_simplex}}\label{sec:appendix_universal_portfolio_proof}
\begin{proof}
Rearranging Theorem \ref{thm:appendix_relative_entropy_diff} gives
\begin{align*}
    -\eta^t u \cdot(\Pi^t \nabla f^t)\ \leq\ D(u|w^t) - D(u|w^{t+1}) + C_{\gamma_t}(\eta^t)^2 u\cdot(\Pi^t \nabla f^t)^2.
\end{align*}
Since $\nabla f = -x^t/(w^t\cdot x^t)$, we have the inequality
\begin{align*}
    -\frac{1}{a}\ \leq\ \Pi\nabla_i f = 1 - x_i^t/(w^t\cdot x^t)\ \leq\ 1,
\end{align*}
as $x^t_i\in[{a},1]$. Thus diving by $\eta^t$ gives the bound
\begin{align*}
    \frac{u\cdot x^t}{w^t\cdot x^t} - 1\ \leq\ \frac{D(u|w^t) - D(u|w^{t+1})}{\eta^t} + \frac{C_\gamma \eta^t}{a^2}.
\end{align*}
Using the inequality $e^x-1\geq x$ for all $x$ gives
\begin{align*}
    \log\left(\frac{u\cdot x^t}{w^t\cdot x^t}\right)\ \leq\ \frac{D(u|w^t) - D(u|w^{t+1})}{\eta^t} + \frac{C_\gamma \eta^t}{a^2}.
\end{align*}

Since the maximum learning rate has the lower bound
\begin{align*}
    \eta^{t,\max}\ =\ \frac{1}{\max_i(1 - x^t_i/w^t\cdot x^t)}\ =\ \frac{1}{1 - \min_i (x^t_i/w^t\cdot x^t)}\ \geq\ 1,
\end{align*}
we can take a fixed learning rate $\eta^t = \eta$. 

Following the steps from Theorem \ref{thm:expert_advice} gives the bound
\begin{align*}
    \sum_{t=1}^T\log(u\cdot l^t) - \sum_{t=1}^T \log(w^t\cdot l^t)\ \leq\ \frac{D(u|w^1)}{\eta} + \frac{T\eta}{2 a^2(1-\eta)}.
\end{align*}
Taking $w^1=(1/N, \ldots, 1/N)$ and minimizing the right-hand side of the inequality w.r.t. $\eta$ gives $\eta = \frac{a \sqrt{2 \log (N)}}{a \sqrt{2 \log (N)}+\sqrt{T}}$. Thus giving the bound
\begin{align*}
    \sum_{t=1}^T\log(u\cdot l^t) - \sum_{t=1}^T \log(w^t\cdot l^t)\ \leq\ \frac{\sqrt{2 T \log (N)}}{a}+\log (N).
\end{align*}
\end{proof}

\section{Karush-Kuhn-Tucker Conditions}\label{sec:KKT}
The Karush-Kuhn-Tucker (KKT) Conditions are first-order conditions that are necessary but insufficient for optimality in constrained optimization problems. For convex problems, it becomes a sufficient condition for optimality. 
{We give a brief overview of the KKT conditions here, and for a full treatment of the subject, we suggest \cite{Kochenderfer2019}.}

Consider a general constrained optimization problem
\begin{align}\label{eq:general_constrained_problem}
\min_w f(w)\qquad \textrm{s.t.}\qquad g_i(w)\ \leq\ 0\quad \textrm{and}\quad h_j(w)\ =\ 0,
\end{align}
for $i=1,...,n$ and $j=1,...,m$. The (primal) Lagrangian is defined as 
\begin{align*}
\mathcal{L}(w, \alpha, \beta)\ =\ f(w) + \sum_i\alpha_i g_i(w) + \sum_j\beta_j h_j(w).
\end{align*}

Consider the new optimization problem
\begin{align*}
\theta(w)\ =\ \max_{\alpha, \beta;\ \alpha_i\geq0}\mathcal{L}(w, \alpha, \beta).
\end{align*}
Note that
\begin{align*}
\theta(w)\ =\ \begin{cases}
    f(w)\qquad &\textrm{if $w$ satisfies the constraints}\\
    \infty \qquad &\textrm{otherwise}.
\end{cases}
\end{align*}
Hence, $\alpha_{i}$ and $\beta_j$ are slack variables that render a given Lagrangian variation equation irrelevant when violated. 

To solve (\ref{eq:general_constrained_problem}), we can instead consider the new optimization problem
\begin{align*}
\min_w \theta(w)\ =\ \min_w \max_{\alpha, \beta;\ \alpha_i\geq0}\mathcal{L}(w, \alpha, \beta).
\end{align*}

Assume $f$ and $g_i$ are convex, $h_j$ is affine, and the constraints are feasible. A solution $(w^*, \alpha^*, \beta^*)$ is an optimal solution to (\ref{eq:general_constrained_problem}) if the following conditions, known as the KKT conditions, are satisfied:
\begin{align*}
\frac{\partial}{\partial w_i}\mathcal{L}(w^*, \alpha^*, \beta^*)\ &=\ 0\qquad \text{(Stationarity)}\\
\alpha_i^*g_i(w^*)\ &=\ 0\qquad \text{(Complementary Slackness)}\\
g_i(w^*)\ \leq \ 0,\ h_j(w)\ &=\ 0 \qquad \text{(Primal Feasibility)}\\
\alpha_i^*\ & \geq \ 0 \qquad \text{(Dual Feasibility)},
\end{align*}
for all $i$ and $j$.

When the constraint is a simplex, the Lagrangian becomes
\begin{align*}
    \mathcal{L}(w,\alpha,\beta)\ =\ f(w) - \sum_i \alpha_iw_i + \beta\left(\sum_i w_i - 1\right).
\end{align*}
Thus stationarity gives
\begin{align*}
    \frac{\partial}{\partial w_i}\mathcal{L}\ =\ \grad_i f - \alpha_i + \beta = 0.
\end{align*}
Let $Q=\{i : w_i = 0\}$ be the active set and $S = \{i : w_i > 0\}$ be the support. The complementary slackness requires $\alpha_i=0$ for $i\in S$, so stationarity gives $\beta = \grad_i f$, \textit{i.e.} constant on the support. The active set's dual feasibility and stationarity conditions thus require $\alpha_i = \beta + \grad_i f \geq 0$.

\vskip 0.2in

\end{document}